\documentclass[reqno]{amsart}

%-----------------------------------------
%
% LAST MODIFIED : G. Stoltz
%  (PARIS, 2 mai 2018) 
%
%-----------------------------------------

%------- packages --------------------
\usepackage{amsmath,amsthm} 
\usepackage{amssymb,mathrsfs} 
\usepackage{graphicx} 
\usepackage{color,subfigure} 
\usepackage{enumerate}  
\usepackage{times}

%------- styles ---------
\newtheorem{theorem}{Theorem}
\newtheorem{lemma}{Lemma}
\newtheorem{proposition}{Proposition}
\newtheorem{corollary}{Corollary}
\newtheorem{remark}[theorem]{Remark}

%----- definitions ----------
\DeclareMathAlphabet{\mathpzc}{OT1}{pzc}{m}{it}
\newcommand{\dps}{\displaystyle } 
\newcommand{\rme}{\mathrm{e}} 
\newcommand{\cL}{\mathcal{L}}
\newcommand{\cA}{\mathcal{A}}
\newcommand{\sL}{\mathscr{L}}

\newcommand{\cD}{\mathcal{D}}
\newcommand{\wcL}{\widetilde{\mathcal{L}}}

\renewcommand{\leq}{\leqslant}
\renewcommand{\geq}{\geqslant}

\newcommand{\bone}{\mathbf{1}}
\newcommand{\Ak}{A}%{A_\kappa}
\newcommand{\Uk}{U}%{U_\kappa}

\newcommand{\cB}{\mathcal{B}}
\newcommand{\nuref}{\nu_{\rm ref}}
\newcommand{\onuref}{\bar\nu_{\mathrm{ref}}}
\newcommand{\nurefz}{\nu_{\mathrm{ref},z}}
\newcommand{\R}{\mathbb{R}}

\newcommand{\cE}{\mathcal{E}}
\newcommand{\llll}{\left\langle\left\langle}
\newcommand{\rrrr}{\right\rangle\right\rangle}
\newcommand{\fh}{\mathfrak{h}}
\newcommand{\wfh}{\widetilde{\mathfrak{h}}}

\DeclareMathOperator{\var}{var}

\begin{document}

%----------------- TITRE -----------------------------
  
\title[Longtime convergence of TAMD]{Longtime convergence of the 
  Temperature-Accelerated Molecular Dynamics Method}

\author{Gabriel Stoltz}
\address{Universit\'e Paris-Est, CERMICS (ENPC), INRIA, F-77455 Marne-la-Vall\'ee, France}
%\curraddr{}
\email{gabriel.stoltz@enpc.fr}
%\thanks{}

\author{Eric Vanden-Eijnden}
\address{Courant Institute of Mathematical Sciences, New York University, 251 Mercer street, New York, NY 10012, USA}
%\curraddr{}
\email{eve2@cims.nyu.edu}
%\thanks{}

%\date{}

\numberwithin{equation}{section}
 
\maketitle

\begin{abstract}
  The equations of the temperature-accelerated molecular dynamics
  (TAMD) method for the calculations of free energies and partition
  functions are analyzed. Specifically, the exponential convergence of
  the law of these stochastic processes is established, with a convergence rate close to the one
  of the limiting, effective dynamics at higher temperature obtained with infinite acceleration. It is
  also shown that the invariant measures of TAMD are close to a known reference
  measure, with an error that can be quantified precisely. Finally, a
  Central Limit Theorem is proven, which allows the estimation of errors
  on properties calculated by ergodic time averages. 
  These results not only demonstrate the usefulness and
  validity range of the TAMD equations, but they also permit in
  principle to adjust the parameter in these equations to optimize
  their efficiency.
\end{abstract}

\section{Introduction}
\label{sec:intro}

The estimation of free energies and partition functions is a classical
problem in statistical
physics~\cite{landau1980statistical,feynman1998statistical,Balian}.
Given a potential $U: \cD_q\times \cD_z \to \R$ defined on some domain
$\cD_q \times \cD_z \subset \R^d \times \R^n$, it amounts to
calculating
\begin{equation}
  \label{eq:1}
  A(z) = -\beta^{-1} \log Z(z), \qquad Z(z) = \int _{\cD_q} \rme^{-\beta U(q,z) } dq
\end{equation}
where $\beta>0$ is a parameter and it is assumed that the properties
of~$U$ are such that the integral in~\eqref{eq:1} converges. The free
energy $A(z)$ and the partition function $Z(z)$ associated with $U$ for
a given value $\beta$ are informative in a variety of contexts. For example:

\begin{itemize}
\item In standard statistical mechanics, where the measure
  $\rme^{-\beta U(q,z)} dq$ properly normalized is the equilibrium
  measure of a thermal system at inverse temperature $\beta$
  (canonical ensemble), $A(z)$ and $Z(z)$ give important information about
  the thermodynamic properties of this system as a function of the
  control parameter $z$.
\item The calculation of $A(z)$ and $Z(z)$ is also useful in machine
  learning, where many generative models are defined in terms of
  unnormalized probability measures. In this set-up, the calculation
  of $Z(z)$ allows the estimation of the probability of a state $q$ in the
  data set, given the parameter $z$.
\item Let
  \begin{equation*}
    U(q,z) = V(q)+ k\|\xi(q) -z\|^2
  \end{equation*}
  where $V: \cD_q\to \R$ is the potential energy function, $k>0$ is a
  parameter, $\|\cdot\|$ some norm on~$\R^n$, and
  $\xi : \cD_q\to \cD_z$ some collective variables giving a
  coarse-grained information on the system. In this setting, $A(z)$
  and $Z(z)$ give information about the push forward of the measure
  proportional to $\rme^{-\beta V(q)}dq$ by the map $\xi$ (also known
  as the image measure by~$\xi$).  Specifically, as $k\to\infty$,
  $Z(z)$ converges to the density associated with this pushforward
  measure.
\end{itemize}

Many methods have been introduced to calculate $A(z)$ and
$Z(z)$. Since one is typically interested in situations where the
dimensionality of the domain $\cD_q$ is large, $d\gg1$, a calculation
of the integral in~\eqref{eq:1} by analytical tools or standard
numerical integration methods is usually hopeless. In these
situations, one therefore typically resorts to methods aiming at
sampling the measure proportional to $\rme^{-\beta U(q,z)} \, dq \, dz$, upon
noticing that $Z(z)$ is the density of the marginal of this measure in
the variables~$z$ . Alternatively, the gradient of the free energy
$A(z)$ can be expressed as an expectation with respect to the measure
proportional to $\rme^{-\beta U(q,z)}dq$ as
\begin{equation}
  \label{eq:2}
  \nabla_z A(z) = \frac{\dps \int _{\cD_q} \nabla_z U(q,z) \, \rme^{-\beta U(q,z) } \, dq}
        {\dps \int _{\cD_q} \rme^{-\beta U(q,z) } \, dq},
\end{equation}
from which $A(z)$ can then be estimated by so-called thermodynamic
integration~\cite{kirkwood-35}.  The sampling of $Z(z)$ or the
calculation of the expectation can be done e.g. via
Metropolis-Hastings Monte-Carlo sampling~\cite{MRRTT53,Hastings70} or
via time-averaging along the solutions of certain stochastic
differential equations (SDE), see for instance the review~\cite{LS16}.
Possible SDEs are
\begin{equation}
  \label{eq:overdamped}
  dq_t = -\nabla_q \Uk(q_t,z)\, dt 
  + \sqrt{2\beta^{-1}} \, dW_t, 
\end{equation}
or
\begin{equation}
  \label{eq:inertial}
  \left\{ \begin{aligned}
      dq_t & = M^{-1} p_t \, dt, \\
      dp_t & = - \nabla_q \Uk(q_t,z)\, dt - \gamma\,
      M^{-1}p_t \, dt 
      + \sqrt{2\gamma\beta^{-1}} \, dW_t,
  \end{aligned} \right.
\end{equation}
where $M$ is a positive definite matrix called mass matrix, $\gamma>0$ is the
friction coefficient, and $W_t$ is a standard $d$-dimensional Wiener
process.  In the chemical-physics literature,  \eqref{eq:overdamped}
and \eqref{eq:inertial} are referred to as the overdamped and
inertial Langevin equations, respectively. Under suitable assumptions
on~$U$ and the nature of the domain~$\cD_q$, both these equations are
ergodic with respect to equilibrium measures whose
configurational part is proportional to $\rme^{-\beta U(q,z)} dq$,
implying that
\begin{equation}
  \label{eq:4}
  \nabla_z A(z) =  \lim_{T\to\infty} \frac1T \int_0^T \nabla_z U(q_t,z) \, dt.
\end{equation}
In practice, however, vanilla approaches based on estimating
$\nabla_zA(z)$ via~\eqref{eq:4} typically fail. The reason is that the
evolution of~$q_t$ in $\cD_q$ can be impeded by barriers of energetic
or entropic origin created by the complex structure of~$U$. These
barriers create dynamical bottlenecks that may render the
convergence of the time-average at the right-hand side of~\eqref{eq:4}
very slow on the clock over which the numerical integration
of~\eqref{eq:overdamped} or~\eqref{eq:inertial} need to be
performed.

Several methods have been introduced to remedy this problem (see
e.g.~\cite{chipot-pohorille-07,abrams2014enhanced,
  pietrucci2017strategies} for recent reviews
and~\cite{stoltz2010free} for a mathematical perspective on the
topic). As a rule, these methods are based on some appropriate
modifications of the evolution equations in~\eqref{eq:overdamped}
and~\eqref{eq:inertial} that allow the trajectory to surmount the
barriers in~$U$ and explore $\cD_q$ faster, thereby accelerating the
convergence of~\eqref{eq:4}.

The main objective of this paper is to analyze one such method, namely
the temperature-accelerated molecular dynamics (TAMD)
technique~\cite{MVE06,abrams2010large,abrams2008efficiency}. TAMD is
based on extensions of~\eqref{eq:overdamped} or~\eqref{eq:inertial} in
which the control parameters $z$ themselves are made dynamical. For
the overdamped Langevin dynamics, these
extended equations read
\begin{equation}
  \label{eq:overdamped_TAMD}
  \left\{ \begin{aligned}
    dq_t & = -\delta^{-1} \nabla_q \Uk(q_t,z_t)\, dt 
    + \sqrt{2(\beta\delta)^{-1}} \, dW_t^q, \\
    dz_t & = -\nabla_z \Uk(q_t,z_t)\, dt + \sqrt{2\bar\beta^{-1}} \, dW_t^z,
  \end{aligned} \right.
\end{equation}
where $W_t^q$ and $W_t^z$ are independent standard $d$- and
$n$-dimensional Wiener processes respectively; while, for the inertial
Langevin dynamics, they read
\begin{equation}
  \label{eq:inertial_TAMD}
  \left\{ \begin{aligned}
    dq_t & = \delta^{-1}  M^{-1} p_t\,dt, \\
    dp_t & = -\delta^{-1}  \nabla_q \Uk(q_t,z_t)\, dt - \delta^{-1} \gamma \, M^{-1}p_t \, dt + \sqrt{2\gamma(\beta\delta)^{-1}} \, dW_t^p, \\
    dz_t & = -\nabla_z \Uk(q_t,z_t)\, dt +
    \sqrt{2\bar\beta^{-1}} \, dW_t^z.
  \end{aligned} \right.
\end{equation}
These equations contain two important parameters: the acceleration
factor, $\delta^{-1}>1$, and the artificial temperature,
$\bar \beta^{-1}$. Taking the first parameter sufficiently large,
i.e. choosing $\delta \ll 1$, allows to accelerate the evolution of $q_t$ or
$(q_t,p_t)$ compared to that of~$z_t$. As a result, one expects that,
given the curent value of $z_t$, $q_t$ or $(q_t,p_t)$ will rapidly
equilibrate according to some invariant measure parametrized by the value~$z_t$, 
and the slow evolution of $z_t$ will only
be affected by the average effect of $q_t$ or $(q_t,p_t)$. More precisely, 
one expects that in the limit as $\delta \to 0$, the evolution of $z_t$
will be effectively captured by that of $\bar z_t$ satisfying
\begin{equation}
  \label{eq:5}
  d\bar z_t = -\nabla_z A(\bar z_t)\, dt +
    \sqrt{2\bar\beta^{-1}} \, dW_t^z,
\end{equation}
where we used~\eqref{eq:2}. A finite time convergence result can be made rigorous using averaging techniques~\cite{PS08}, as already formally done in~\cite{MVE06}. In contrast, we focus here on longtime properties.

In view of~\eqref{eq:5}, and if we take the artificial temperature higher than the physical one, i.e. $\bar \beta < \beta$, \eqref{eq:overdamped_TAMD} and~\eqref{eq:inertial_TAMD} effectively allow one to sample the free energy calculated at the physical temperature using an artificially high temperature to accelerate this sampling. Indeed, the invariant probability measure of~\eqref{eq:5} is
\begin{equation}
  \label{eq:6}
  \onuref(dz)  = \rme^{-\bar \beta A(z)} dz
\end{equation}
where we assumed that the unimportant additive constant that can be
added to $A(z)$ is chosen so that this measure is normalized.  For
$\delta \ll1$, one expects the invariant measures
for~\eqref{eq:overdamped_TAMD} and~\eqref{eq:inertial_TAMD} to be
close to the reference measures
\begin{equation}
  \label{eq:ref_meas_over}
  \nu_{\text{ref}}(dq\,dz) = 
  \rme^{-\beta \Uk(q,z)} \rme^{-(\overline{\beta}-\beta) \Ak(z)} \, dq \, dz
\end{equation}
and
\begin{equation}
  \label{eq:ref_meas_under}
  \nu_{\text{ref}}(dq \, dp \,dz) = 
  \rme^{-\beta \Uk(q,z)} \rme^{-(\overline{\beta}-\beta) \Ak(z)} \, 
  \rme^{-\beta p^T M^{-1}p/2} \, dq \, dp \, dz,
\end{equation}
respectively. The marginal in $z$ of these measures
is~$\bar \nu_{\text{ref}}$. Since $\delta$ is small but finite, in
order to make this statement precise, we need to estimate how close
the invariant measures for~\eqref{eq:overdamped_TAMD}
and~\eqref{eq:inertial_TAMD} are from~\eqref{eq:ref_meas_over}
and~\eqref{eq:ref_meas_under}. We would also like to estimate how fast
the solutions to these equations converge to their invariant
measures. These questions are nontrivial since, with
$\bar \beta \not = \beta$, ~\eqref{eq:overdamped_TAMD}
and~\eqref{eq:inertial_TAMD} are non-reversible stochastic dynamics.

\begin{remark}
For practical computations, the timestep used for the numerical integration of TAMD is limited by the fastest time scale. In this setting, it is in fact better to think of the collective variables as being slowed down by the factor $\delta$ rather than the dynamics in $q$ or $(q,p)$ being accelerated. In the original dynamics we then have two times scales: the $\mathrm{O}(1)$ timescale over which the system evolves, and (assuming there is metastabilty associated with some barrier crossing event of height~$E$), the $\mathrm{O}(\rme^{\beta E})$ Arrhenius timescale. In TAMD, we have three timescales: the $\mathrm{O}(1)$ timescale over which the original system evolves (same as above), the $\mathrm{O}(\delta^{-1})$ timescale over which the collective variables evolve, and the $\mathrm{O}(\delta^{-1}\rme^{\overline{\beta} E})$ time for the crossing event at the artificially high temperature $\overline{\beta}^{-1} > \beta^{-1}$. As long as $\delta^{-1}\rme^{\overline{\beta} E} \ll \rme^{\beta E}$, TAMD provides a computational gain. Typically, this is not so difficult to achieve: the loss via $\delta$ is algebraic while the gain from temperature acceleration is exponential.
\end{remark}

\medskip

The results we obtain on the convergence of TAMD are presented in
Sec.~\ref{sec:mainresults}. Basically, we first establish in Sec.~\ref{sec:expconv} the exponential
convergence of the law of the processes~\eqref{eq:overdamped_TAMD} and~\eqref{eq:inertial_TAMD}, with a 
rate close to the convergence rate of the limiting dynamics~\eqref{eq:5}. 
We next give an asymptotic expansion of their
respective invariant measures in powers of the parameter $\delta$ and
establish their closeness to the known reference measures~\eqref{eq:ref_meas_over} or~\eqref{eq:ref_meas_under}
(see Sec.~\ref{sec:expIM}). We finally state in Sec.~\ref{sec:asvar} a central limit theorem
for the computation of ergodic averages, with an asymptotic variance close to the one 
associated with the limiting dynamics~\eqref{eq:5}. The proofs of these results
are given in Secs.~\ref{sec:proof_thm:unif_exp_cv},
\ref{sec:proof_inv_meas}, and~\ref{sec:proof_expansion}. Beside
establishing the validity range of the TAMD equations
in~\eqref{eq:overdamped_TAMD} and~\eqref{eq:inertial_TAMD}, these
results also permit in principle to adjust the parameters in these
equations to optimize their efficiency.

\section{Set-up and main results}
\label{sec:mainresults}

We focus on situations where the system's positions $q$ are in
a compact domain with periodic boundary conditions $\cD_q = (L\mathbb{T})^d$ 
(as is standard in practical applications in computational statistical physics)
and, for the inertial dynamics, the associated momenta are in $\R^d$. 
We also assume that $z \in \cD_z = \mathbb{T}$
(the extension to $\cD_z = \mathbb{T}^n$ with $n>1$ is straightforward
but makes the notation more cumbersome), and denote the extended
phase space for $q$ and $z$, or $q$, $p$ and $z$, by
$\cE = \cE_x \times \cD_z$, with $\cE_x = \cD_q$ or
$\cD_q \times \R^d$, respectively. In the sequel, we denote by $x$ the
physical variable -- namely, $x=q$ in the overdamped case and
$x=(q,p)$ in the inertial case. 
We also assume that the potential energy function $\Uk:\cD_q\times\cD_z\to\R$ is smooth.
Let us emphasize that it is important for our analysis that $\cD_q \times \cD_z$ is bounded 
since we use several times that functions such as $\nabla U$ and $\nabla (\partial_z U)$ are uniformly bounded.

We write the generators of the
dynamics~\eqref{eq:overdamped_TAMD} and~\eqref{eq:inertial_TAMD} as
\begin{equation}
  \label{eq:generator}
  \sL_\delta = \frac1\delta \cL_0 + \cL_1,
\end{equation}
with
\[
\cL_1 = -\partial_z \Uk(q,z)\partial_z + \bar\beta^{-1} \partial^2_z,
\]
and, in the overdamped case~\eqref{eq:overdamped_TAMD},
\[
\cL_0 = -\nabla_q \Uk(q,z)^T \nabla_q + \beta^{-1} \Delta_q, 
\]
while, in the inertial case~\eqref{eq:inertial_TAMD},
\[
\cL_0 = p^T M^{-1} \nabla_q - \nabla_q \Uk(q,z)^T \nabla_p 
- \gamma p^T M^{-1} \nabla_p + \beta^{-1} \Delta_p.
\]
We also denote by $\cA$ the generator of the limiting dynamics~\eqref{eq:5}:
\begin{equation}
  \label{eq:mathcal_A}
  \cA = -\Ak'(z)\partial_z + \bar \beta^{-1} \partial_z^2,
\end{equation}
and assume that $A(z)$ is shifted by an unimportant additive
constant chosen such that
\[
\bar \nu_{\text{ref}}(\cD_z) =\int_{\cD_z} \rme^{-\overline{\beta}\Ak(z)}\, dz = 1.
\]
We consider all operators as defined on $L^2(\nuref)$ otherwise
explicitly mentioned. In particular, the adjoint $\mathcal{T}^*$ of a closed operator
$\mathcal{T}$ is defined by the following property: for any smooth
functions $\varphi,\phi$ (compactly supported in the $p$ variable for
the inertial dynamics),
\[
\int_\cE \left(\mathcal{T}\varphi\right)\phi \, d\nuref 
= \int_\cE \varphi \left(\mathcal{T}^*\phi\right) \, d\nuref.
\]

For $\bar\beta,\beta,\delta > 0$ fixed, it is
easy to prove that the dynamics~\eqref{eq:overdamped_TAMD}
and~\eqref{eq:inertial_TAMD} have a unique invariant measure
$\nu_\delta(dx\,dz)$ with a smooth, positive density with respect to
the Lebesgue measure. This is clear in the overdamped case~\eqref{eq:overdamped_TAMD} 
since the dynamics is set on a compact space and the Brownian motion acts on all variables
(hence the generator is elliptic). For the inertial case~\eqref{eq:inertial_TAMD}, 
the existence and uniqueness of the invariant measure relies on the fact that 
$1+|p|^2$ is a Lyapunov function, together with some minorization condition on the continuous 
dynamics or a controllability argument combined with the hypoelliptic nature of the generator; see
the review in~\cite[Section~2.4]{LS16} and references therein. In any case, 
the smoothness of the invariant measure is a consequence of the hypoellipticity of the generator.

Denoting by $\psi(t,x,z) \, dx \,dz$ the law of the stochastic processes~\eqref{eq:overdamped_TAMD} or~\eqref{eq:inertial_TAMD} at time $t$ (by hypoellipticity, it can indeed be shown that the law has a smooth density with respect to the Lebesgue measure), the Fokker--Planck equation governing the evolution of the law reads
\[
\partial_t \psi = \sL_\delta^\dagger \psi,
\]
where $\sL_\delta^\dagger$ is the adjoint of $\sL_\delta$ on $L^2(\cE)$. Denoting by $f(t)$ the Radon--Nikodym derivative of $\psi(t,x,z) \, dx \,dz$ with respect to $\nu_{\rm ref}(dx\,dz)$, the Fokker--Planck equation can be rewritten on $L^2(\nuref)$ as
\begin{equation}
  \label{eq:FP_L2_nuref}
  \partial_t f(t) = \sL_\delta^* f(t).
\end{equation}
In particular, introducing
\begin{equation}
  \label{eq:def_h_delta}
  h_\delta = \frac{d\nu_\delta}{d\nu_{\text{ref}}},
\end{equation}
the function $h_\delta$ is smooth and satisfies the stationary Fokker--Planck equation
\[
\sL_\delta^* h_\delta = 0.
\]  
We expect that $h_\delta$ can be expanded at any order in~$\delta$, with a leading term which is~1.  This result is established below in Theorem~\ref{thm:inv_meas}. We also expect that $f(t)$ converges to $h_\delta$, as made precise in Theorem~\ref{thm:unif_exp_cv}.

\subsection{Exponential convergence of the law}
\label{sec:expconv}

Our first set of results concerns the convergence of the law of the processes towards the invariant measure. In view of~\eqref{eq:FP_L2_nuref}, $f(t) = \rme^{t \sL_\delta^*}f(0)$, so that the convergence result can be rephrased as a convergence result for the evolution semigroup $\rme^{t \sL_\delta^*}$ on $L^2(\nuref)$. We state the result for general densities $f \in L^2(\nuref)$, which are not necessarily such that $f \geq 0$ and $\int_\cE f \, d\nuref = 1$ as would be the case for probabilities densities. 

\begin{theorem}[Uniform exponential convergence]
\label{thm:unif_exp_cv}
Denote by $R_{\overline{\beta}}>0$ the constant in the Poincar\'e inequality
satisfied by the the marginal measure $\onuref(dz)$ in~\eqref{eq:6}: for all $\varphi \in H^1(\cD_z)$ such that
\[
\int_{\cD_z} \varphi(z) \, \onuref(dz) =0,
\]
it holds 
\begin{equation}
  \label{eq:Poincare_marginal}
  \int_{\cD_z} \left|\varphi(z)\right|^2 \, \onuref(dz) \leq 
  \frac{1}{R_{\overline{\beta}}^2}\int_{\cD_z} |\varphi'(z)|^2 \, \onuref(dz). 
\end{equation}
There is $\delta_*>0$ such that, for all
$\delta \in (0,\delta_*]$, it holds $h_\delta \in
L^2(\nuref)$. Moreover, for any $\varepsilon > 0$, there exist
$\delta_\varepsilon > 0$ sufficiently small and
$c_\varepsilon \in [1,2]$ such that, for any
$\delta \in (0,\delta_\varepsilon]$ and any $f \in L^2(\nuref)$,
\begin{equation}
\label{eq:exp_contraction_to_h_delta}
\begin{aligned}
  \forall t \geq 0, \qquad &\left\| \rme^{t\sL_\delta^*} f -
    \left({\textstyle\int_\cE} f \, d\nuref\right) h_\delta
  \right\|_{L^2(\nuref)}\\
  & \qquad \leq c_\varepsilon
  \rme^{-(\lambda-\varepsilon) t} \left\|f - \left({\textstyle\int_\cE} f \,
      d\nuref\right)h_\delta \right\|_{L^2(\nuref)},
\end{aligned}
\end{equation}
with 
\begin{equation}
  \label{eq:def_lambda}
  \lambda = \frac{R_{\overline{\beta}}^2}{\overline{\beta}}.
\end{equation}
For the overdamped dynamics~\eqref{eq:overdamped_TAMD}, the constant
$c_\varepsilon$ can be chosen equal to~1.
\end{theorem}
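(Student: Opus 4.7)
The strategy is a hypocoercivity/averaging argument tailored to the slow--fast splitting $\sL_\delta = \delta^{-1}\cL_0 + \cL_1$. Introduce $\Pi$, the $L^2(\nuref)$-orthogonal projection onto $\ker\cL_0$. Since $\ker\cL_0 = \ker\cL_0^*$ equals the space of functions depending only on $z$ (by ergodicity of $\cL_0$ on the fibers, using compactness of $\cD_q$ and smoothness of $U$), we have $\Pi f(z) = \int f(x,z)\,\mu_z(dx)$ where $\mu_z$ is the conditional law of $\nuref$ at fixed $z$ (proportional to $\rme^{-\beta U(q,z)}dq$, tensored with a $p$-Gaussian in the inertial case). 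Direct computations yield the algebraic identities $\Pi\cL_0 = \cL_0\Pi = 0$, $\Pi\cL_0^* = \cL_0^*\Pi = 0$, and, crucially, $\Pi\cL_1\Pi = \cA$ (a restatement of~\eqref{eq:2}).

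\textbf{Two coercivity ingredients.} Decompose $f = g + r$ with $g = \Pi f$ (identified with an element of $L^2(\onuref)$) and $r = (1-\Pi)f$. First, the macroscopic input: $\cA$ is self-adjoint on $L^2(\onuref)$ and, via~\eqref{eq:Poincare_marginal}, satisfies $-\langle\cA g,g\rangle_{\onuref} \geq \lambda\|g\|_{\onuref}^2$ for mean-zero $g$, with $\lambda$ as in~\eqref{eq:def_lambda}. Second, the microscopic input: there exists $\kappa > 0$ with $-\textnormal{Re}\,\langle\cL_0 r,r\rangle_{\nuref} \geq \kappa\|r\|^2_{\nuref}$ on $(1-\Pi)L^2(\nuref)$. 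In the overdamped case this reduces to a uniform-in-$z$ Poincar\'e inequality for $\mu_z$ (elementary given compactness and smoothness of $U$); in the inertial case it requires a Villani-type or DMS hypocoercivity estimate for the kinetic Langevin generator parametrized by $z$, valid in a norm equivalent to $L^2$.

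\textbf{Combining via a modified norm.} To lift both estimates to a single dissipation inequality, I would introduce a DMS-type functional
\begin{equation*}
  \mathcal{N}_\eta(f)^2 = \|f\|_{\nuref}^2 + 2\eta\,\textnormal{Re}\,\langle C f, f\rangle_{\nuref},\quad C = (1 + B^*B)^{-1}B^*,\ B = (1-\Pi)\cL_1\Pi,
\end{equation*}
and differentiate along $\partial_t f = \sL_\delta^* f$ after reducing to $\int f\,d\nuref = 0$ by subtracting the $h_\delta$-component (whose existence and smoothness are guaranteed by the pre-theorem discussion). Using the identities above, the dissipation splits into a very negative $-\delta^{-1}\kappa\|r\|^2$ from microscopic coercivity, a $-\lambda\|g\|_{\onuref}^2$ term arising from the commutator of $C$ with $\cA$, and cross terms of size $\eta$ or $\delta$ that Cauchy--Schwarz controls thanks to boundedness of $C$ together with the microscopic dissipation. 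Choosing $\eta$ then $\delta$ small enough in terms of $\varepsilon > 0$, one obtains $\frac{d}{dt}\mathcal{N}_\eta^2 \leq -2(\lambda-\varepsilon)\mathcal{N}_\eta^2$; Gronwall and the equivalence $\|f\|^2 \leq \mathcal{N}_\eta^2 \leq 2\|f\|^2$ (valid for $\eta$ small) produce~\eqref{eq:exp_contraction_to_h_delta} with $c_\varepsilon \in [1,2]$.

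\textbf{Overdamped refinement and main obstacle.} In the overdamped case, $\cL_0$ is elliptic and self-adjoint, and its strong $\|\nabla_q f\|^2$ dissipation should permit a direct $L^2(\nuref)$ energy estimate (absorbing the non-symmetric contributions of $\cL_1$ via Cauchy--Schwarz against $\nabla_q f$), obviating the modified norm and giving $c_\varepsilon = 1$; alternatively, one may transfer from $L^2(\nu_\delta)$ (where the Markov semigroup is contractive) using the equivalence $\|\cdot\|_{\nuref} \sim \|\cdot\|_{\nu_\delta}$, whose ratio tends to $1$ as $\delta \to 0$ by Theorem~\ref{thm:inv_meas}. The main obstacle is ensuring that the cross-term estimates degrade the macroscopic rate by only $O(\eta) + O(\delta)$ rather than by a fixed amount: the unboundedness of $B$ makes the coupling between slow and fast components delicate, and one must carefully track which operators carry the full rate $\lambda$ and which merely contribute perturbatively, verifying that the $\delta^{-1}\kappa\|r\|^2$ dissipation absorbs every coupling term without degrading the effective slow rate on $g$.
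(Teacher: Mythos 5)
Your plan differs from the paper's in its use of the DMS machinery: you apply the hypocoercivity modification to the slow--fast splitting (setting $B=(1-\Pi)\cL_1\Pi$, i.e.\ the off-diagonal block of the slow generator), whereas the paper uses a single modified inner product $(\cdot,\cdot)_\eta$ in which the auxiliary operator is $B = (1+\tfrac{1}{\beta m}\nabla_q^*\nabla_q)^{-1}\Pi_p\cL_{\rm ham}$ --- the standard DMS operator for the kinetic Langevin generator at \emph{fixed} $z$. In that modified norm the paper obtains fiber-wise coercivity of $\cL_0^*$ on $(1-\Pi_z)L^2(\nuref)$ (Lemma~\ref{lem:conditioned_coercivity}) and then handles the slow--fast coupling directly: the factor $\delta^{-1}$ in front of $\cL_0$ provides enough microscopic dissipation to absorb all cross terms by raw Cauchy--Schwarz, so no second, slow--fast DMS modification is needed (Lemmas~\ref{lem:marginal_coercivity}--\ref{lem:extra_term_to_control} and Corollary~\ref{corr:global_coercivity}).

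The concrete gap in your proposal is the microscopic coercivity claim ``$-\mathrm{Re}\,\langle\cL_0 r,r\rangle_{\nuref}\geq\kappa\|r\|_{\nuref}^2$ on $(1-\Pi)L^2(\nuref)$''. For the inertial dynamics, $-\langle\cL_0 r,r\rangle_{L^2(\nuref)}=\tfrac{\gamma}{\beta}\|\nabla_p r\|^2_{L^2(\nuref)}$, which vanishes on functions of $q$ alone (with zero $q$-average), so this inequality is simply false in the raw $L^2(\nuref)$ norm. You acknowledge that the estimate only holds ``in a norm equivalent to $L^2$'', but your functional $\mathcal{N}_\eta$ is explicitly built on the unmodified $\|\cdot\|_{\nuref}^2$, so the $-\delta^{-1}\kappa\|r\|^2$ term you rely on to ``absorb every coupling term'' is not available. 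To make your route work in the inertial case you would need to nest two modifications --- a fiber-wise hypocoercive base norm (as in the paper's $(\cdot,\cdot)_\eta$) and then your slow--fast correction on top --- and verify that the two do not interfere; this is considerably more delicate than either your sketch or the paper's proof. A second point your plan does not address: since $\nuref$ is \emph{not} the invariant measure of $\sL_\delta$ when $\bar\beta\neq\beta$, $\cL_1$ is not skew-adjoint in $L^2(\nuref)$; its adjoint is $\cL_1^*=-\tfrac{1}{\bar\beta}\partial_z^*\partial_z+(\tfrac{\beta}{\bar\beta}-1)(\partial_z U-A')\partial_z$, and controlling the extra non-symmetric term $(\partial_z U-A')\partial_z$ requires a dedicated argument (the paper's Lemma~\ref{lem:extra_term_to_control}). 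The standard DMS structure you invoke assumes a skew-adjoint transport part, so this also needs modification. Your overdamped case is plausible (there $\cL_0$ is coercive in raw $L^2$ and $\eta=0$ suffices), and your idea of transferring contractivity from $L^2(\nu_\delta)$ using Theorem~\ref{thm:inv_meas} is an interesting alternative, though it circularly requires the resolvent bounds established from this very theorem.
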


This result is proven in Sec.~\ref{sec:proof_thm:unif_exp_cv}. It shows that the convergence rate of the law of the TAMD dynamics can be made arbitrarily close to the convergence rate of the law of the limiting dynamics~\eqref{eq:5} upon choosing $\delta$ sufficiently small. Recall indeed that the Poincar\'e inequality~\eqref{eq:Poincare_marginal} implies that, for a function $g : \cD_z \to \R$ belonging to $L^2(\onuref)$, the following convergence result holds for the semigroup $\rme^{t \cA^*}$ associated with~\eqref{eq:5}: 
\[
\left\| \rme^{t \cA^*}g - \bone \right\|_{L^2(\onuref)} \leq \rme^{-\lambda t} \left\| g - \bone \right\|_{L^2(\onuref)},
\]
see for instance~\cite{bakry-gentil-ledoux-14}.

From a technical viewpoint, the proof uses techniques similar to ones used to prove the convergence of the semigroups associated with overdamped or inertial Langevin dynamics at equilibrium, namely Gronwall-type estimates for the squared norm in $L^2(\nuref)$ of solutions of the Fokker--Planck equation~\eqref{eq:FP_L2_nuref}; see for instance~\cite{bakry-gentil-ledoux-14} in the overdamped case, and~\cite{HP08,Villani09,DMS09,DMS15} in the inertial case; as well as the review provided in~\cite[Section~2]{LS16}. 

The contraction estimates~\eqref{eq:exp_contraction_to_h_delta} also imply resolvent estimates on the generator and its adjoint. To state these estimates, we introduce the projectors
\[
P_\delta f = f - {\textstyle\int_\cE} f \, d\nu_\delta,
\]
and
\[
Q_\delta f = f - \left({\textstyle\int_\cE}  f \, d\nuref\right)h_\delta,
\]
as well as the images of $L^2(\nuref)$ by these projectors: 
\[
\begin{aligned}
P_\delta L^2(\nuref) & = \left\{ f \in L^2(\nuref) \, \left| \, P_\delta f = f \right.\right\}, \\
Q_\delta L^2(\nuref) & = \left\{ f \in L^2(\nuref) \, \left| \, Q_\delta f = f \right.\right\}.
\end{aligned}
\]
Finally, we denote by $\mathcal{B}(E)$ the Banach space of bounded,
linear operators on a Banach space~$E$, and by
$\|\cdot\|_{\mathcal{B}(E)}$ the associated norm:
\[
\|\mathcal{A}\|_{\mathcal{B}(E)} = 
\sup_{\varphi \in E\backslash\{0\}} \frac{\|\mathcal{A}\varphi\|_E}{\|\varphi\|_E}.
\]
Then Theorem~\ref{thm:unif_exp_cv} implies:
 
\begin{corollary}
\label{corr:resolvent_estimates}
For any $\varepsilon \in (0,\lambda)$, there exists
$\delta_\varepsilon > 0$ such that, for any
$\delta \in (0,\delta_\varepsilon]$,
\[
\begin{aligned}
  \left\| \left(\sL_\delta^*\right)^{-1} \right\|_{\cB(Q_\delta
    L^2(\nuref))} 
  & \leq \frac{c_\varepsilon}{\lambda-\varepsilon}, \\
  \left\| \sL_\delta^{-1} \right\|_{\cB(P_\delta L^2(\nuref))} 
  & \leq \frac{c_\varepsilon}{\lambda-\varepsilon},
\end{aligned}
\]
where $c_\varepsilon$ is the constant appearing in Theorem~\ref{thm:unif_exp_cv}.
\end{corollary}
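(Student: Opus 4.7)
The plan is to derive both bounds from the exponential contraction in Theorem~\ref{thm:unif_exp_cv}: the first by integrating the semigroup in time, the second by a duality argument that identifies $P_\delta$ as the $L^2(\nuref)$-adjoint of $Q_\delta$.

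First I would verify that the projector $Q_\delta$ commutes with $\rme^{t\sL_\delta^*}$. Two ingredients suffice: $\sL_\delta^* h_\delta = 0$ gives $\rme^{t\sL_\delta^*} h_\delta = h_\delta$, while $\sL_\delta\bone = 0$ yields
\[
\int_\cE \sL_\delta^* g\, d\nuref = \int_\cE g\, (\sL_\delta \bone)\, d\nuref = 0,
\]
so that $t \mapsto \int_\cE \rme^{t\sL_\delta^*} f\, d\nuref$ is constant. Combining these two facts produces $Q_\delta\, \rme^{t\sL_\delta^*} = \rme^{t\sL_\delta^*} Q_\delta$, so~\eqref{eq:exp_contraction_to_h_delta} reduces on the invariant closed subspace $Q_\delta L^2(\nuref)$ to the clean bound $\|\rme^{t\sL_\delta^*} g\|_{L^2(\nuref)} \leq c_\varepsilon \rme^{-(\lambda-\varepsilon)t} \|g\|_{L^2(\nuref)}$.

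From this point I would define $\mathcal{R} g = -\int_0^{+\infty} \rme^{t\sL_\delta^*} g\, dt$ for $g \in Q_\delta L^2(\nuref)$. The exponential decay makes the Bochner integral absolutely convergent with $\|\mathcal{R}\|_{\cB(Q_\delta L^2(\nuref))} \leq c_\varepsilon/(\lambda-\varepsilon)$, and the usual semigroup identity $\sL_\delta^* \mathcal{R} g = \lim_{t\to +\infty} \rme^{t\sL_\delta^*} g - g = -g$ shows $\mathcal{R} = (\sL_\delta^*)^{-1}$ on this subspace, giving the first bound.

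For the bound on $\sL_\delta^{-1}$ over $P_\delta L^2(\nuref)$, I would rely on the duality $P_\delta = Q_\delta^*$ in $L^2(\nuref)$: a direct computation using $d\nu_\delta = h_\delta\, d\nuref$ gives
\[
\langle Q_\delta f, g\rangle_{L^2(\nuref)} = \int_\cE fg\, d\nuref - \Big({\textstyle\int_\cE} f\, d\nuref\Big)\Big({\textstyle\int_\cE} g\, d\nu_\delta\Big) = \langle f, P_\delta g\rangle_{L^2(\nuref)}.
\]
Since $\nu_\delta$ is invariant for $\sL_\delta$, the subspace $P_\delta L^2(\nuref)$ is preserved by $\sL_\delta$, and the restricted operator $\sL_\delta|_{P_\delta L^2(\nuref)}$ is the Hilbert space adjoint of $\sL_\delta^*|_{Q_\delta L^2(\nuref)}$. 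Standard Hilbert space theory then yields $\sL_\delta^{-1} = \big((\sL_\delta^*)^{-1}\big)^*$ on $P_\delta L^2(\nuref)$, and the two operator norms coincide, proving the second bound. The argument is essentially routine; the only point needing care is the bookkeeping of domains, but since $\sL_\delta$ and $\sL_\delta^*$ are closed operators and the subspaces involved are closed, I do not anticipate any real obstacle.
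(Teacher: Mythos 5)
Your proof is correct and follows essentially the same route as the paper: establish the commutation of $Q_\delta$ with the semigroup, realize $(\sL_\delta^*)^{-1}$ on $Q_\delta L^2(\nuref)$ as the time-integral of the semigroup so the exponential decay in Theorem~\ref{thm:unif_exp_cv} delivers the operator bound, and transfer it to $\sL_\delta^{-1}$ on $P_\delta L^2(\nuref)$ via the duality $P_\delta = Q_\delta^*$. The only cosmetic difference is that you verify the commutation by splitting it into the two invariance facts ($\rme^{t\sL_\delta^*} h_\delta = h_\delta$ and conservation of the $\nuref$-average), whereas the paper does the same computation in one line.
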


This result is proved in
Section~\ref{sec:proof_corollary:resolvent_estimates}. An implicit
statement in the above corollary (made explicit in the proof) is that
the operator $\sL_\delta^*$ commutes with $Q_\delta$ and therefore
stabilizes $Q_\delta L^2(\nuref)$; and similarly that $\sL_\delta$
commutes with $P_\delta$ and therefore stabilizes
$P_\delta L^2(\nuref)$.

\subsection{Perturbative expansion of the invariant measure}
\label{sec:expIM}

A second set of results proven in this paper concerns the expansion in
powers of~$\delta$ of the invariant measure
$d\nu_{\delta} = h_\delta \, d\nuref$, relying on asymptotic expansions
and the resolvent bounds from
Corollary~\ref{corr:resolvent_estimates}. In order to state it,
we introduce the projection operator with respect to the conditional measure
in~$x$ at fixed~$z$:
\[
(\Pi_z \varphi)(z) = \int_{\cE_x} \varphi(x,z) \, \nurefz(dx).
\]
where
\[
\nurefz(\cdot) = \frac{\nuref(\cdot\  dz)}{\onuref(dz)}, 
%\qquad  \onuref(dz) = \nuref(dz \  \cE_x) = \rme^{-\overline{\beta}\Ak(z)} \, dz.
\]
with $\onuref$ defined in~\eqref{eq:6}. Note that $\Pi_z$ is an orthogonal projector on~$L^2(\nuref)$, i.e. $\Pi_z^2 = \Pi_z = \Pi_z^*$.

\begin{theorem}
  \label{thm:inv_meas}
  There exist $C,\delta_*>0$ such that
  \[
  \forall \delta \in (0,\delta_*], 
    \qquad 
    \left\|h_\delta - (1 + \delta \fh ) \right\|_{L^2(\nuref)} \leq C \delta^2,
    \]
    where
    \[
    \fh = \left(\Pi_z \cA^{-1}\Pi_z \cL_1^* - 1\right)(\cL_0^*)^{-1} 
    (1-\Pi_z)\cL_1^* \bone \in L^2(\nuref), 
    \qquad
    \int_\cE \fh \, d\nuref = 0.
    \]
\end{theorem}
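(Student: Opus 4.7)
The plan is to justify a three-term ansatz $h_\delta = \bone + \delta \fh + \delta^2 \fh_2 + \rho_\delta$ for the stationary density, where $\fh$ is the quantity appearing in the statement, $\fh_2$ is an explicit correction at the next order, and $\rho_\delta$ is a remainder controlled via the resolvent estimate on $\sL_\delta^*$ supplied by Corollary~\ref{corr:resolvent_estimates}. The construction of $\fh$ and $\fh_2$ is a standard Fredholm/multiscale computation; the key point is that the solvability condition at each order is dictated by the limiting operator $\cA$ on $L^2(\onuref)$.

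Inserting the ansatz into $\sL_\delta^* h_\delta = 0$ and matching powers of $\delta$ yields the cascade $\cL_0^* \bone = 0$ (automatic, since $\cL_0$ preserves the fibre measure $\nurefz$), $\cL_0^* \fh = -\cL_1^* \bone$, and $\cL_0^* \fh_2 = -\cL_1^* \fh$. The operator $\cL_0^*$, acting in $L^2(\nuref)$, has kernel exactly $\Pi_z L^2(\nuref)$ and is invertible on $(\Id - \Pi_z)L^2(\nuref)$: this follows from the Poincar\'e inequality for $\nurefz$ in the overdamped case, and from the hypocoercive estimates of~\cite{DMS09,DMS15} in the inertial case; in both cases the bound is uniform in $z$ because $\cD_q \times \cD_z$ is compact and $U$ is smooth. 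Each cell problem is therefore solvable provided its right-hand side lies in $(\Id-\Pi_z)L^2(\nuref)$, and to check this I would use the key duality identity
\[
\Pi_z \cL_1^* \varphi = \cA^* \varphi = \cA \varphi \qquad \text{for any } \varphi = \varphi(z),
\]
obtained by a direct integration by parts that uses the factorization $\nuref = \nurefz \otimes \onuref$, the expression~\eqref{eq:2} for $\nabla_z A$, and the self-adjointness of $\cA$ on $L^2(\onuref)$. Applied to $\varphi = \bone$ this gives $\Pi_z \cL_1^* \bone = 0$, so $(\Id - \Pi_z)\fh = -(\cL_0^*)^{-1}(\Id-\Pi_z)\cL_1^* \bone$; the solvability condition at order $\delta^1$ then determines the remaining component via $\cA \Pi_z \fh = \Pi_z \cL_1^* (\cL_0^*)^{-1}(\Id-\Pi_z)\cL_1^* \bone$. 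Inverting $\cA$ on the zero-mean subspace of $L^2(\onuref)$ and selecting the mean-zero antiderivative reproduces exactly the formula for $\fh$ given in the statement, with $\int_\cE \fh \, d\nuref = 0$.

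Once $\fh$ is fixed, the same procedure yields $\fh_2 = -(\cL_0^*)^{-1}(\Id-\Pi_z)\cL_1^* \fh \in L^2(\nuref)$, and by construction
\[
\sL_\delta^* \bigl(\bone + \delta \fh + \delta^2 \fh_2\bigr) = \delta^2 \cL_1^* \fh_2.
\]
The remainder $\rho_\delta := h_\delta - \bone - \delta \fh - \delta^2 \fh_2$ therefore satisfies $\sL_\delta^* \rho_\delta = -\delta^2 \cL_1^* \fh_2$, and since $\int_\cE \rho_\delta \, d\nuref = 0$ (using $\int h_\delta \, d\nuref = 1$ together with the zero-mean property of $\fh$ and $\fh_2$), it lies in $Q_\delta L^2(\nuref)$. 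The resolvent bound of Corollary~\ref{corr:resolvent_estimates} then gives $\|\rho_\delta\|_{L^2(\nuref)} \leq C \delta^2 \|\cL_1^* \fh_2\|_{L^2(\nuref)}$, and the triangle inequality yields the claimed $O(\delta^2)$ estimate.

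The main technical obstacle I foresee is the uniform-in-$z$ control of the cell-problem inverse $(\cL_0^*)^{-1}$ on $(\Id-\Pi_z) L^2(\nuref)$ in the inertial case, where the fixed-$z$ kinetic Fokker--Planck operator is only hypocoercive rather than coercive. One must verify that the hypocoercivity constants depend only on quantities attached to $U(\cdot,z)$ that are uniformly bounded in $z$, and that the resulting $\fh$ and $\fh_2$, which involve $p$-polynomial weights through the hypocoercive modified norm, genuinely lie in $L^2(\nuref)$. Compactness of $\cD_q \times \cD_z$ and smoothness of $U$ make this possible, but it is where the bulk of the technical work lies.
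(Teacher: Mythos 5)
Your proof follows essentially the same route as the paper's: same two-term correction ansatz, same hierarchy of cell problems $\cL_0^*\bone = 0$, $\cL_0^*\fh = -\cL_1^*\bone$, $\cL_0^*\fh_2 = -\cL_1^*\fh$, same resolution of the order-$1$ solvability condition to pick out $\Pi_z\fh$, and the same resolvent bound from Corollary~\ref{corr:resolvent_estimates} to close the estimate on the remainder $\rho_\delta$. Your final formula for $\fh$ agrees with the statement. The one cosmetic difference is that you verify the solvability conditions by the duality identity $\Pi_z\cL_1^*\varphi = \cA\varphi$ for $z$-only $\varphi$ (in particular $\Pi_z\cL_1^*\bone = 0$), whereas the paper verifies $\Pi_z W = 0$ by a direct computation using the explicit expressions for $\Ak'$ and $\Ak''$ in~\eqref{eq:derivatives_A_kappa}; these are equivalent and the content is the same.

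The technical gap that you flag at the end is real, but you have mislocated what makes it hard. The issue is not ``$p$-polynomial weights through the hypocoercive modified norm'' (that norm only enters the proof of Theorem~\ref{thm:unif_exp_cv}, not here: the bound you need is purely in $L^2(\nuref)$). The genuine difficulty is regularity in $z$: to apply the resolvent bound to $\rho_\delta$ you need $\cL_1^*\fh_2 \in L^2(\nuref)$, and $\cL_1^*$ contains $\partial_z^2$. Since $\fh_2 = -(\cL_0^*)^{-1}(1-\Pi_z)\cL_1^*\fh$ involves the cell-problem inverse and $\partial_z$ does not commute with $(\cL_0^*)^{-1}$, you must control, say, $\partial_z^2(\cL_0^*)^{-1}(1-\Pi_z)$ acting on sufficiently smooth data. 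The paper's Lemma~\ref{lem:reg_commutator_dz} does exactly this via the commutator identity $[\partial_z,\cL_0^*(1-\Pi_z)] = -\nabla_q(\partial_z U)^T\nabla_q$ in the overdamped case (and $\nabla_q(\partial_z U)^T\nabla_p$ in the inertial case), combined with boundedness of $\nabla_q(\cL_0^*)^{-1}(1-\Pi_z)$ (or $\nabla_p(\cL_0^*)^{-1}(1-\Pi_z)$). This is the piece your argument must supply to be complete; uniform-in-$z$ invertibility of $\cL_0^*$ on $(1-\Pi_z)L^2(\nuref)$, which you correctly observe follows from compactness and smoothness of $U$, is not by itself enough.
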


It is in fact possible to construct higher order correction terms in
order to have remainders of arbitrary power of~$\delta$.  The proof of
Theorem~\ref{thm:inv_meas} is given in Sec.~\ref{sec:proof_inv_meas}, where it shown in particular that $\fh$ makes sense.

%----------------------------------------------------------------------------------------
\subsection{Characterization of the asymptotic variance}
\label{sec:asvar}

In practice, average properties are estimated using ergodic means. For
a given observable~$\varphi \in L^1(\nu_\delta)$,
\[
\widehat{\varphi}_T = \frac1T \int_0^T \varphi(x_t,z_t) \, dt
\]
almost surely converges to $\mathbb{E}_{\nu_\delta}(\varphi)$ as
$T \to \infty$. Moreover, for $\varphi \in L^2(\nuref)$, the fact that
the Poisson equation
\begin{equation}
  \label{eq:Poisson_variance_prop}
  -\sL_\delta \Phi_\delta = P_\delta \varphi, \qquad \int_\cE \Phi_\delta \, d\nuref = 0,
\end{equation}
admits a unique solution in $L^2(\nuref)$ by
Corollary~\ref{corr:resolvent_estimates} ensures that a central limit
theorem holds for any initial distribution
(see~\cite{Bhattacharya82}), with associated asymptotic variance
\[
\sigma_{\varphi,\delta}^2 = 
\lim_{T \to \infty} T \var_{\nu_\delta}
\left(\widehat{\varphi}_T\right) 
= 2 \int_\cE (P_\delta \varphi) \left(-\sL_\delta^{-1}P_\delta
  \varphi\right) \, d\nu_\delta 
= 2 \int_\cE (P_\delta \varphi) \Phi_\delta h_\delta \, d\nuref.
\]

The following result shows that, with some additional
regularity/integrability properties on the observable, the asymptotic
variance $\sigma_{\varphi,\delta}^2$ is close to the asymptotic variance $\sigma_{\varphi,{\rm ref}}^2$ of the
reference dynamics, where $z_t$ evolves according to the dynamics~\eqref{eq:5} with generator $\cA$
while $x_t$ is instantaneously equilibrated according to the corresponding conditional measure~$\nurefz(dx)$. 
More precisely,
\[
\sigma_{\varphi,{\rm ref}}^2 = 2 \int_\cE \left(\Pi_z P_0 \varphi\right)
\left(-\cA^{-1}\Pi_z P_0 \varphi \right) d\onuref,
\qquad 
P_0 \varphi = \varphi - {\textstyle\int_\cE}  \varphi \, d\nuref. 
\]
A quantitative estimate of the error however requires considering more
regular observables, namely in $H^2(\nuref)$ rather than
$L^2(\nuref)$. We state this result as:

\begin{theorem}
  \label{thm:expansion_sL_delta_inv}
  There exist $C,\delta_*>0$ such that, for any $\varphi \in H^2(\nuref)$,
  \[
  \forall \delta \in (0,\delta_*], \qquad 
  \left| \sigma_{\varphi,\delta}^2 - \sigma_{\varphi,{\rm ref}}^2 \right| \leq C \delta \|\varphi\|_{L^2(\nuref)} \|\varphi\|_{H^2(\nuref)}.
  \]
\end{theorem}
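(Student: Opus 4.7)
The plan is to construct a first-order asymptotic approximation $\Phi_\delta^{(1)} = \Phi_0 + \delta\Phi_1$ of the solution $\Phi_\delta$ to the Poisson equation~\eqref{eq:Poisson_variance_prop}, control the residual with the resolvent estimate of Corollary~\ref{corr:resolvent_estimates}, and then expand the three-factor integral $\sigma_{\varphi,\delta}^2 = 2\int_\cE (P_\delta \varphi)\Phi_\delta h_\delta \, d\nuref$ with the help of Theorem~\ref{thm:inv_meas}. The starting observation is that the natural leading term $\Phi_0 := -\cA^{-1}\Pi_z P_0 \varphi$ depends only on $z$, so that $P_0\varphi - \Pi_z P_0\varphi$ integrates to zero against it under $\nurefz$, giving
\[
\frac{\sigma_{\varphi,\mathrm{ref}}^2}{2} = \int_\cE (P_0 \varphi)\,\Phi_0 \, d\nuref.
\]
This identifies the leading term of the expansion with the reference variance and reduces the problem to an $O(\delta)$ remainder bound.

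To build $\Phi_\delta^{(1)}$, insert the ansatz into $-\sL_\delta \Phi_\delta = P_\delta \varphi$ and match powers of $\delta$. The order $\delta^{-1}$ equation $\cL_0 \Phi_0 = 0$ forces $\Phi_0 = \Phi_0(z)$, since $\ker \cL_0$ consists of functions independent of $x$ at fixed $z$. At order $\delta^0$, solvability of $-\cL_0 \Phi_1 = P_0 \varphi + \cL_1 \Phi_0$ requires the $\Pi_z$-projection of the right-hand side to vanish; using the key identity $\Pi_z \cL_1 \Pi_z = \cA \Pi_z$ (a direct consequence of~\eqref{eq:2}, which gives $\Pi_z(\partial_z U)(z) = A'(z)$), this reduces to $\cA \Phi_0 = -\Pi_z P_0 \varphi$ and pins down $\Phi_0$. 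The fluctuating part is then $(1-\Pi_z)\Phi_1 = -\cL_0^{-1}(1-\Pi_z)(P_0\varphi + \cL_1\Phi_0)$, and $\Pi_z\Phi_1$ is fixed by enforcing $\int_\cE \Phi_\delta^{(1)}\, d\nuref = 0$. By construction, $-\sL_\delta \Phi_\delta^{(1)} = P_0\varphi - \delta\, \cL_1 \Phi_1$.

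Subtracting from $-\sL_\delta \Phi_\delta = P_\delta \varphi$ gives $-\sL_\delta(\Phi_\delta - \Phi_\delta^{(1)}) = (P_\delta\varphi - P_0\varphi) + \delta\,\cL_1\Phi_1$. Theorem~\ref{thm:inv_meas} yields $|P_\delta\varphi - P_0\varphi| = \left|\int\varphi(h_\delta - 1)\,d\nuref\right| \leq C\delta\|\varphi\|_{L^2(\nuref)}$, and Corollary~\ref{corr:resolvent_estimates} applied on $P_\delta L^2(\nuref)$ produces $\|\Phi_\delta - \Phi_\delta^{(1)}\|_{L^2(\nuref)} \leq C\delta\, \|\varphi\|_{H^2(\nuref)}$, with the $H^2$-norm entering because $\cL_1\Phi_1$ carries one $z$-derivative of $\Phi_1$, itself obtained via $\cL_0^{-1}$ from an expression involving one $z$-derivative of $\Phi_0 = -\cA^{-1}\Pi_z P_0\varphi$. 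The variance difference then splits as
\begin{align*}
\frac{\sigma_{\varphi,\delta}^2 - \sigma_{\varphi,\mathrm{ref}}^2}{2}
&= \int_\cE (P_\delta \varphi - P_0 \varphi)\,\Phi_\delta h_\delta \, d\nuref
+ \int_\cE (P_0 \varphi)(\Phi_\delta - \Phi_0) h_\delta \, d\nuref \\
&\quad + \int_\cE (P_0 \varphi)\,\Phi_0 \,(h_\delta - 1) \, d\nuref,
\end{align*}
and each piece is bounded by $C\delta\|\varphi\|_{L^2(\nuref)}\|\varphi\|_{H^2(\nuref)}$ using Cauchy--Schwarz, Theorem~\ref{thm:inv_meas}, the uniform bound $\|\Phi_\delta\|_{L^2(\nuref)} \leq C\|\varphi\|_{L^2(\nuref)}$ from Corollary~\ref{corr:resolvent_estimates}, and the error estimate just displayed.

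The main technical obstacle is the quantitative inversion of $\cL_0$ on $(1-\Pi_z) L^2(\nuref)$ with control in higher Sobolev norms. This is straightforward in the overdamped, elliptic setting, but in the inertial, merely hypoelliptic case it requires hypocoercivity-type commutator arguments in the spirit of~\cite{HP08,Villani09,DMS15} to track how $z$-derivatives propagate through $\cL_0^{-1}$ and thereby secure the bound $\|\cL_1\Phi_1\|_{L^2(\nuref)} \leq C\|\varphi\|_{H^2(\nuref)}$. A secondary point is to choose the normalization of $\Phi_\delta^{(1)}$ so that the residual genuinely belongs to the invariant subspace $P_\delta L^2(\nuref)$ on which Corollary~\ref{corr:resolvent_estimates} provides uniform resolvent control.
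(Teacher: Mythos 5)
Your proposal reproduces the paper's argument: the ansatz $\Phi_0+\delta\Phi_1$ with the solvability conditions $\cA\Phi_0=-\Pi_z P_0\varphi$ and $\Phi_1=\cL_0^{-1}(1-\Pi_z)(\cL_1\Pi_z\cA^{-1}\Pi_z-1)P_0\varphi$ is exactly the construction in the proof of Proposition~\ref{prop:approx_solution_Poisson}, the residual control via Corollary~\ref{corr:resolvent_estimates} and the bound \eqref{eq:diff_Pdelta_P0} is the same, and the three-term decomposition of $\sigma_{\varphi,\delta}^2-\sigma_{\varphi,\mathrm{ref}}^2$ is the step the paper declares to ``directly follow.'' The only inaccuracy is cosmetic: $\cL_1\Phi_1$ carries \emph{two} $z$-derivatives of $\Phi_1$ (not one), which is why Lemma~\ref{lem:reg_commutator_dz} is stated for $\partial_z^2(\cL_0^*)^{-1}(1-\Pi_z)$ on $H^2$.
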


A careful inspection of the proof would allow to make precise the
leading order term in the difference of the asymptotic variances. We
refrain here from doing so since the expression of this leading order
correction is somewhat cumbersome. 
Theorem~\ref{thm:expansion_sL_delta_inv} directly follows from two points: (i)
the expansion for the invariant measure provided by
Theorem~\ref{thm:inv_meas}, which shows in particular that there exists $K>0$ such
that, for $\delta>0$ sufficiently small,
\begin{equation}
  \label{eq:diff_Pdelta_P0}
  \left| P_\delta \varphi - P_0 \varphi \right| \leq K \delta \|\varphi\|_{L^2(\nuref)};
\end{equation}
and (ii) an expansion for the unique solution
$\Phi_\delta \in L^2(\nuref)$ of the Poisson equation~\eqref{eq:Poisson_variance_prop}
as given by the following result:

\begin{proposition}
  \label{prop:approx_solution_Poisson}
  There exists $C,\delta_*>0$ such that, for any
  $\varphi \in H^2(\nuref)$, the unique solution
  $\Phi_\delta \in L^2(\nuref)$ of the Poisson equation~\eqref{eq:Poisson_variance_prop} satisfies 
  \[
  \forall \delta \in (0,\delta_*], \qquad 
  \left\|\Phi_\delta + \mathcal{A}^{-1} \Pi_z P_0 \varphi\right\|_{L^2(\nuref)}  
  \leq C \delta \|\varphi\|_{H^2(\nuref)}.
  \]
\end{proposition}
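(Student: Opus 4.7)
The plan is to carry out a singular-perturbation Hilbert expansion adapted to the decomposition $\sL_\delta = \delta^{-1}\cL_0 + \cL_1$. I would construct an explicit approximate solution $\Phi_\delta^{\mathrm{app}} = \Phi_0 + \delta \Phi_1$ of~\eqref{eq:Poisson_variance_prop} with leading term $\Phi_0 = -\mathcal{A}^{-1}\Pi_z P_0 \varphi$, compute the residual $r_\delta := -\sL_\delta \Phi_\delta^{\mathrm{app}} - P_\delta \varphi$, show that $\|r_\delta\|_{L^2(\nuref)} \le C\delta\,\|\varphi\|_{H^2(\nuref)}$, and finally apply the resolvent estimate from Corollary~\ref{corr:resolvent_estimates} to the Poisson equation $-\sL_\delta(\Phi_\delta - \Phi_\delta^{\mathrm{app}}) = -r_\delta$ in order to convert this into the claimed estimate on $\Phi_\delta - \Phi_0$.

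The choices of $\Phi_0$ and $\Phi_1$ are forced by matching powers of $\delta$ in $-\sL_\delta(\Phi_0 + \delta \Phi_1) = P_\delta \varphi$. At order $\delta^{-1}$, $\cL_0 \Phi_0 = 0$, so $\Phi_0 = \Pi_z \Phi_0$ is a function of $z$ alone. At order $\delta^0$, solvability in $(1-\Pi_z)L^2(\nuref)$ of $-\cL_0 \Phi_1 = P_\delta \varphi + \cL_1 \Phi_0$ requires $\Pi_z(P_\delta \varphi + \cL_1 \Phi_0) = 0$; the identity $\Pi_z \cL_1 \Pi_z = \cA \Pi_z$ (a direct consequence of formula~\eqref{eq:2}, exactly as in the derivation of~\eqref{eq:5}) then pins down $\cA \Phi_0 = -\Pi_z P_\delta \varphi$. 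Since only an $O(\delta)$ residual is sought, I would simply set $\Phi_0 := -\cA^{-1}\Pi_z P_0 \varphi$, which by~\eqref{eq:diff_Pdelta_P0} differs from the exact choice by $O(\delta)\|\varphi\|_{L^2(\nuref)}$, and take $\Phi_1 := -\cL_0^{-1}(1-\Pi_z)(P_\delta \varphi + \cL_1 \Phi_0) \in (1-\Pi_z)L^2(\nuref)$. Here $\cL_0$ is boundedly invertible on $(1-\Pi_z)L^2(\nurefz)$ uniformly in $z$, which is the same ingredient already used to construct $\fh$ in Theorem~\ref{thm:inv_meas}. A short calculation then yields
\[
r_\delta = \Pi_z(P_0 - P_\delta)\varphi - \delta\, \cL_1 \Phi_1.
\]

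The first contribution to $r_\delta$ is $O(\delta)\|\varphi\|_{L^2(\nuref)}$ by~\eqref{eq:diff_Pdelta_P0}, so the heart of the proof is establishing $\|\cL_1 \Phi_1\|_{L^2(\nuref)} \le C\|\varphi\|_{H^2(\nuref)}$. This is where the $H^2$-regularity of $\varphi$ enters: since $\cL_1$ carries $\partial_z$ and $\partial_z^2$, one needs uniform-in-$z$ bounds on the first two $z$-derivatives of $\cL_0(z)^{-1}$ applied to the $z$-dependent source $(1-\Pi_z)(P_\delta \varphi + \cL_1 \Phi_0)$. Differentiating the equation $\cL_0(z)\Phi_1(\cdot\,;z) = -(1-\Pi_z)(P_\delta\varphi + \cL_1 \Phi_0)$ in $z$ produces additional source terms involving derivatives of $\cL_0$, of $\Phi_1$ itself and of the data, which can be controlled by iterated application of the uniform spectral gap of $\cL_0(z)$ on $(1-\Pi_z)L^2(\nurefz)$ (direct ellipticity in the overdamped case, hypocoercive-type estimates in the kinetic case). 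Since $\Phi_0 = -\cA^{-1}\Pi_z P_0 \varphi$ lies in $H^2(\cD_z)$ by elliptic regularity for $\cA$, a careful book-keeping of derivatives shows that all these quantities are controlled by $\|\varphi\|_{H^2(\nuref)}$.

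To translate the bound on $r_\delta$ into the stated estimate, observe that $r_\delta \in P_\delta L^2(\nuref)$ (integrate the defining relation against $\nu_\delta$ and use invariance), so Corollary~\ref{corr:resolvent_estimates} yields $\|\Phi_\delta - \Phi_\delta^{\mathrm{app}} - c_\delta\|_{L^2(\nuref)} \le C\|r_\delta\|_{L^2(\nuref)}$ for some constant $c_\delta$. Since both $\Phi_\delta$ and $\Phi_\delta^{\mathrm{app}}$ have zero $\nuref$-mean, one has $c_\delta = \int(\Phi_\delta - \Phi_\delta^{\mathrm{app}})(h_\delta - 1)\,d\nuref$, which is $O(\delta)\|\Phi_\delta - \Phi_\delta^{\mathrm{app}}\|_{L^2(\nuref)}$ by Theorem~\ref{thm:inv_meas} and can be absorbed into the left-hand side. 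Combining with $\|\delta\Phi_1\|_{L^2(\nuref)} = O(\delta\|\varphi\|_{H^2(\nuref)})$ via the triangle inequality gives the announced estimate. The main technical obstacle, as anticipated, is the uniform-in-$z$ regularity analysis of $\cL_0(z)^{-1}$ needed to bound $\cL_1 \Phi_1$; this is precisely the reason the statement requires $\varphi \in H^2(\nuref)$ rather than merely $L^2(\nuref)$.
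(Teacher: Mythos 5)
Your proposal is correct and follows essentially the same route as the paper: construct a two-term Hilbert expansion $\Phi_0 + \delta\Phi_1$ with $\Phi_0 = -\cA^{-1}\Pi_z P_0\varphi$ and $\Phi_1 = -\cL_0^{-1}(1-\Pi_z)(\cL_1\Phi_0 + P_0\varphi)$ (your $P_\delta$ in the source is killed by $1-\Pi_z$, so your $\Phi_1$ coincides with the paper's $\psi$), estimate the residual using the $H^2$-regularity of $\varphi$ together with the boundedness of $\partial_z\cL_0^{-1}(1-\Pi_z)$ and $\partial_z^2\cL_0^{-1}(1-\Pi_z)$ (the content of Lemma~\ref{lem:reg_commutator_dz}), and close with the resolvent bound of Corollary~\ref{corr:resolvent_estimates}. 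The one place where you go beyond the paper's exposition is your explicit treatment of the free additive constant $c_\delta$ arising from inverting $\sL_\delta$ on $P_\delta L^2(\nuref)$ rather than $L^2_0(\nuref)$: the paper writes that the estimate ``directly follows,'' while you note that $c_\delta = \int_\cE(\Phi_\delta - \Phi_\delta^{\mathrm{app}})(h_\delta - 1)\,d\nuref$ and absorb it using Theorem~\ref{thm:inv_meas}, which is a welcome and correct clarification of a slightly elided point.
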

The proof of this proposition is given in Sec.~\ref{sec:proof_expansion}.

%--------------------------------------------------------
\section{Proof of Theorem~\ref{thm:unif_exp_cv}}
\label{sec:proof_thm:unif_exp_cv}

We start by noting that, for the overdamped
dynamics~\eqref{eq:overdamped_TAMD}, the operator $\sL_\delta$ can be
rewritten as
\[
\sL_\delta = -\frac{1}{\beta \delta} \nabla_q^* \nabla_q 
-\frac{1}{\overline{\beta}} \partial_z^* \partial_z 
+ \left(\frac{\beta}{\overline{\beta}}-1\right) (\partial_z \Uk - \Ak')\partial_z,
\]
since
\[
\int_\cD (\partial_z f) g \, d\nuref = - \int_\cD f (\partial_z g) \,d\nuref 
+ \int_\cD f g \, \left[\beta \partial_z \Uk 
+ (\overline{\beta}-\beta)\Ak'\right] d\nuref,
\]
so that
$\partial_z^* = -\partial_z + \beta \partial_z \Uk +
(\overline{\beta}-\beta)\Ak'$. Similarly, for the inertial
dynamics~\eqref{eq:inertial_TAMD},
%assuming that $M = m \mathrm{Id}$ in order to simplify the notation, 
\[
\sL_\delta = \frac{1}{\delta}\left(\mathcal{L}_{\rm ham} 
  - \frac{\gamma}{\beta} \nabla_p^* \nabla_p \right) 
-\frac{1}{\overline{\beta}} \partial_z^* \partial_z 
+ \left(\frac{\beta}{\overline{\beta}}-1\right) (\partial_z \Uk - \Ak')\partial_z.
\]
where
\[\mathcal{L}_{\rm ham} = p^T M^{-1} \nabla_q - \nabla_q \Uk(q,z)^T \nabla_p.
\]
The first two terms in the expressions for $\sL_\delta$ will provide a
control on the derivatives with respect to~$x$ and~$z$, hence on the norm 
of functions with
average~0 with respect to~$\nuref$. The last term can be considered as a perturbation. 
In all this proof, we consider functions to be real valued for the 
simplicity of notation. It is straightforward to extend the proofs to complex valued functions 
upon taking real values of the scalar products when necessary.

In order to encompass both overdamped and inertial dynamics,
we introduce the following scalar product, suggested in~\cite{DMS09,DMS15}
(see also~\cite{IOS17,RS17} for rewritings in a $L^2(\nuref)$ setup):
\[
\left( \varphi,\phi \right)_\eta = \int_{\cD_z} \llll
\varphi(\cdot,z), 
\phi(\cdot,z) \rrrr_\eta \, \onuref(dz) ,
\]
where, for some parameter $\eta \in (0,1)$ to be made precise, 
\[
\begin{aligned}
\llll \varphi(\cdot,z), \phi(\cdot,z) \rrrr_\eta 
& = \left \langle \varphi(\cdot,z), \phi(\cdot,z) \right\rangle_{L^2(\nurefz)} \\
& \quad - \eta \left \langle (B\varphi)(\cdot,z), \phi(\cdot,z)
\right\rangle_{L^2(\nurefz)} \\
& \quad - \eta \left \langle \varphi(\cdot,z), (B\phi)(\cdot,z) \right\rangle_{L^2(\nurefz)}.
\end{aligned}
\]
In this expression, $B$ is the following bounded operator acting on variables~$q,p$ only:
\[
B = \left(1 + \frac{1}{\beta m} \nabla_q^* \nabla_q \right)^{-1} \Pi_p
\cL_{\rm ham}, 
\]
where $M \leq m \mathrm{Id}$ (in the sense of symmetric matrices) and
\[
(\Pi_p\varphi)(q) = \left(\frac{\beta}{2\pi}\right)^{d/2} (\det M)^{-1/2} 
  \int_{\R^d} \varphi(q,p)\, \rme^{-\frac12\beta p^T M^{-1}p} \, dp.
\]
The operator $B$ only makes sense for the inertial dynamics, so that we
always set $\eta = 0$ for the overdamped  dynamics. 
The modified scalar product is equivalent to the standard one since
\[
\|B\|_{\mathcal{B}(L^2(\nuref))} \leq \frac12,
\]
see~\cite{DMS09,DMS15,IOS17,RS17}; more precisely
\begin{equation}
  \label{eq:scalar_product}
  (1-\eta) \| \varphi \|^2_{L^2(\nuref)} \leq 
  \left( \varphi, \varphi \right)_\eta \leq (1+\eta) \| \varphi \|^2_{L^2(\nuref)}.
\end{equation}
The interest of the modified scalar product lies in the following coercivity estimates for the virtual dynamics (i.e. the dynamics on $q$ or $(q,p)$ with $z$ fixed).

\begin{lemma}
  \label{lem:conditioned_coercivity}
  For any $\eta > 0$, there exists $\rho > 0$ such that, for all smooth and compactly supported function~$f$,
  \[
  \llll (1-\Pi_z)f(\cdot,z), (1-\Pi_z)f(\cdot,z) \rrrr_\eta \leq 
  \frac{1}{\rho} \llll -\cL_0^* f(\cdot,z), f(\cdot,z) \rrrr_\eta.
  \]
  In particular,
  \begin{equation}
    \label{eq:Poincare_in_x}
    \| f - \Pi_z f \|_{L^2(\nuref)}^2 \leq \frac{1}{(1-\eta)\rho} \left( -\cL_0^* f, f\right)_\eta.
  \end{equation}
\end{lemma}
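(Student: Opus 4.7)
The strategy is to reduce the inequality to a pointwise-in-$z$ coercivity estimate for the frozen-$z$ generator $\cL_0$, and then invoke a classical Poincar\'e inequality in the overdamped case and the Dolbeault--Mouhot--Schmeiser (DMS) hypocoercivity framework in the inertial case. The reduction is immediate: since $\cL_0$, $\Pi_z$ and $B$ all act only on the $x$ variables at fixed $z$, the first inequality is already pointwise in $z$, and it is enough to prove
\[
\llll (1-\Pi_z)f(\cdot,z), (1-\Pi_z)f(\cdot,z)\rrrr_\eta \leq \frac{1}{\rho}\llll -\cL_0^* f(\cdot,z), f(\cdot,z)\rrrr_\eta
\]
for each $z\in\cD_z$ with $\rho>0$ \emph{uniform} in $z$. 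Integrating this pointwise inequality against $\onuref(dz)$ and using the pointwise version of the norm equivalence~\eqref{eq:scalar_product} then yields~\eqref{eq:Poincare_in_x}.

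In the overdamped case we have $\eta=0$, the bracket reduces to $\langle\cdot,\cdot\rangle_{L^2(\nurefz)}$, and $-\cL_0^* = \beta^{-1}\nabla_q^*\nabla_q$, so the right-hand side equals $\beta^{-1}\|\nabla_q f\|^2_{L^2(\nurefz)}$. The claim is then precisely the Poincar\'e inequality for $\nurefz\propto \rme^{-\beta U(\cdot,z)}$ on the compact torus $\cD_q = (L\mathbb{T})^d$, whose constant is bounded uniformly in $z$ by a Holley--Stroock perturbation argument applied to the uniform-measure Poincar\'e inequality on $\cD_q$, using smoothness of $U$ and compactness of $\cD_q\times\cD_z$.

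For the inertial case ($\eta > 0$), I would apply the DMS hypocoercivity scheme~\cite{DMS09,DMS15} in the $L^2(\nuref)$ formulation of~\cite{IOS17,RS17}, pointwise in $z$. The choice $B = (1+(\beta m)^{-1}\nabla_q^*\nabla_q)^{-1}\Pi_p\cL_{\rm ham}$ is designed so that expanding $\llll -\cL_0^* f, f\rrrr_\eta$ produces three contributions: the friction term $\gamma\beta^{-1}\|\nabla_p f\|^2_{L^2(\nurefz)}$, which controls $\|(1-\Pi_p)f\|^2_{L^2(\nurefz)}$ via the Gaussian Poincar\'e inequality in $p$; a macroscopic coercive term of order $\eta\|\nabla_q\Pi_p f\|^2_{L^2(\nurefz)}$ coming from the antisymmetrized $B$-correction (essentially $\eta\langle\Pi_p\cL_{\rm ham} f,\cL_{\rm ham}f\rangle$ modulated by the bounded resolvent); and error terms of order~$\eta$ bounded by $\|(1-\Pi_p)f\|^2$ and $\|\nabla_p f\|^2$ via $\|B\|_{\mathcal{B}(L^2(\nurefz))}\leq 1/2$, Cauchy--Schwarz, and standard commutator estimates for $[B,\cL_{\rm ham}]$. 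Choosing $\eta$ small enough absorbs the error terms into the friction and $B$-contributions; combining the surviving controls with the configurational Poincar\'e inequality for the marginal $\Pi_p\nurefz\propto \rme^{-\beta U(q,z)}\,dq$ then dominates $\|(1-\Pi_z)f\|^2_{L^2(\nurefz)}$ and yields the estimate with some $\rho=\rho(\eta)>0$.

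The main technical obstacle is the uniformity in $z\in\cD_z$ of \emph{all} the constants appearing in the DMS argument: the bound $\|B\|_{\mathcal{B}(L^2(\nurefz))}\leq 1/2$, the $p$-Gaussian Poincar\'e constant (trivially $z$-independent), the configurational Poincar\'e constant for $\Pi_p\nurefz$, and the commutator bounds, which involve $\nabla_q U$ and $\nabla_q^2 U$. Each of these is controlled by $\sup_{\cD_q\times\cD_z}(|U|+|\nabla U|+|\nabla^2 U|)$, finite by smoothness of $U$ and compactness of $\cD_q\times\cD_z$; the $z$-dependence of the resolvent $(1+(\beta m)^{-1}\nabla_q^*\nabla_q)^{-1}$ entering $B$ is handled by the same Holley--Stroock argument. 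Once this uniformity is secured, integration against $\onuref(dz)$ yields the lemma.
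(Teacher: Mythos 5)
Your proposal follows essentially the same route as the paper: reduce to a pointwise-in-$z$ estimate for the frozen generator $\cL_0$, verify it via a $z$-uniform Poincar\'e inequality in $q$ (with $\rho = R_q^2/\beta$, $\eta=0$) in the overdamped case and via the DMS hypocoercivity scheme of~\cite{DMS09,DMS15,IOS17,RS17} in the inertial case, then integrate in $z$ and invoke the norm equivalence~\eqref{eq:scalar_product} to deduce~\eqref{eq:Poincare_in_x}. The only difference is that you spell out the DMS bookkeeping and invoke Holley--Stroock explicitly for the $z$-uniformity, while the paper simply cites the references and attributes the uniformity to smoothness of $U$ and compactness of $\cD_q\times\cD_z$.
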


The second statement follows directly from the first one by
integrating in~$z$ and using~\eqref{eq:scalar_product}. The proof of
the first statement for the overdamped dynamics is a direct
consequence of the following Poincar\'e inequality, which is uniform in the
parameter~$z$ (relying on the smoothness of~$U$ and the fact that $\cD_q,\cD_z$ are both bounded): 
there exists $R_q>0$ such that, for any $f \in H^1(\nuref)$,
\begin{equation}
  \label{eq:Poincare_in_q}
  \int_{\cD_q} \left|f(q,z) - (\Pi_z f)(z)\right|^2 \, \nurefz(dq) \leq \frac{1}{R^2_q}\int_{\cD_q} \left|\nabla_q f(q,z) \right|^2 \, \nurefz(dq).
\end{equation}
In this case, it is possible to consider $\eta = 0$ and $\rho = R_q^2/\beta$. For the inertial dynamics, the first statement is proved as in~\cite{DMS09,DMS15}, relying also on the above uniform in~$z$ Poincar\'e inequality. A careful inspection of the proof for the inertial dynamics shows that $\rho$ is of order~$\eta$ (see for instance~\cite{RS17}).

\begin{remark}
  In both cases, the parameter $\rho>0$ quantifies the mixing rate of
  the dynamics, uniformly in~$z$: the larger it is, the faster the
  mixing is. The parameter $\rho$ would typically be small if there is some metastability in the dynamics
  for a given value of~$z$.
\end{remark}

The estimates of Lemma~\ref{lem:conditioned_coercivity} show how to
bound $\|(1-\Pi_z)f\|_{L^2(\nuref)}$ using only the part $\cL_0$ of
the dynamics depending on the spatial variables $x = q$ or $(q,p)$.
Now, recall that
\begin{equation}
  \label{eq:decomposition_f_Pi_z_1_Pi_z}
  \|f\|^2_{L^2(\nuref)} = \|\Pi_z f\|^2_{L^2(\nuref)} + \|(1-\Pi_z) f\|^2_{L^2(\nuref)}.
\end{equation}
In order to control the full norm $\|f\|_{L^2(\nuref)}$ for functions in
\[
L^2_0(\nuref) = P_0 L^2(\nuref) = \left\{ f \in L^2(\nuref) \,
\left| {\textstyle\int_\cE} f \, \nuref = 0 \right. \right\},
\]
we therefore need to control the missing part $\Pi_z f$, using the part of the
generator depending on derivatives in~$z$. Before we state such a
result in Lemma~\ref{lem:marginal_coercivity} below, we mention 
commutator identities which are useful for inertial Langevin dynamics. The commutator of two operators $A,B$ is defined as $[A,B] = AB-BA$. 

\begin{lemma}
  \label{lem:commutation_B_partial_z}
  The commutators $[B,\partial_z]$ and $[B^*,\partial_z]$ are bounded operators: there exists $K_{\rm c} \in R_+$ such that
  \[
  \|[B,\partial_z] \|_{\mathcal{B}(L^2(\nuref))} \leq 
  K_{\rm c}, \qquad \| [B^*,\partial_z]\|_{\mathcal{B}(L^2(\nuref))} \leq K_{\rm c}. 
  \]
  Moreover, 
  \begin{equation}
  \label{eq:commutation_Pi_z}
  \Pi_z \partial_z f - \partial_z \Pi_z f = \beta \Pi_z\left[(\partial_z U - \Ak')f\right]. 
\end{equation}
\end{lemma}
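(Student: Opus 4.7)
The ``Moreover'' identity is an elementary differentiation under the integral sign. Using that $\nurefz(dx)$ has density $\rme^{-\beta(\Uk(q,z) - \Ak(z))}$ with respect to a $z$-independent reference measure (the Lebesgue measure $dq$, tensored with the Gaussian in $p$ in the inertial case), the logarithmic derivative of this density in $z$ equals $-\beta(\partial_z \Uk - \Ak')$. Differentiating $\Pi_z f(z) = \int_{\cE_x} f(x,z) \, \nurefz(dx)$ and moving the derivative past the integral produces exactly the claimed identity.

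For the commutator estimates, the main difficulty is that the resolvent $R := (1 + (\beta m)^{-1}\nabla_q^*\nabla_q)^{-1}$ depends on $z$ through $\nabla_q^* = -\nabla_q + \beta\nabla_q \Uk$, so $R$ does not commute with $\partial_z$. The plan is to isolate this $z$-dependence via the resolvent identity $\partial_z R = -R(\partial_z T)R$ with $T = R^{-1}$; since $\Pi_p$ (integration against a $z$-independent Gaussian) commutes with $\partial_z$, rearrangement yields
\[
[B, \partial_z] = R\, \Pi_p\,[\cL_{\rm ham}, \partial_z] + R\,(\partial_z T)\, B.
\]
Only the $\nabla_q \Uk$ terms in $T$ and $\cL_{\rm ham}$ depend on $z$, giving by direct calculation $\partial_z T = m^{-1}(\partial_z \nabla_q \Uk)^T \nabla_q$ and $[\cL_{\rm ham}, \partial_z] = (\partial_z \nabla_q \Uk)^T \nabla_p$. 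An integration by parts in $p$ then turns $\Pi_p[\cL_{\rm ham}, \partial_z]$ into $\beta(\partial_z \nabla_q \Uk)^T M^{-1}\Pi_p(p\,\cdot)$. Using the smoothness of $\Uk$ on the compact domain $\cD_q \times \cD_z$ (uniform bound on $\partial_z \nabla_q \Uk$), the elliptic-regularity estimate $\|\nabla_q R\|_{\cB(L^2(\nuref))} \leq \sqrt{\beta m}$ (obtained from $(\beta m)^{-1}\|\nabla_q R g\|^2 \leq \langle R g, T R g \rangle = \langle Rg, g \rangle \leq \|g\|^2$), and the Gaussian-moment bound $\|\Pi_p(p\,\cdot)\|_{L^2(\nuref)} \leq C \|\cdot\|_{L^2(\nuref)}$, the first summand is bounded directly.

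The key remaining ingredient is the bound $\|\nabla_q B\|_{\cB(L^2(\nuref))} < \infty$ needed for the second summand. I would obtain it from an energy estimate: the defining identity $TB = \Pi_p \cL_{\rm ham}$ and the anti-symmetry $\cL_{\rm ham}^* = -\cL_{\rm ham}$ on $L^2(\nuref)$ (since $\cL_{\rm ham}$ preserves the canonical measure at fixed $z$ by Liouville's theorem) yield
\[
\|Bf\|^2 + (\beta m)^{-1}\|\nabla_q B f\|^2 = \langle Bf, \Pi_p \cL_{\rm ham} f\rangle = -\langle \cL_{\rm ham} Bf, f\rangle,
\]
and since $Bf$ is $p$-independent, $\cL_{\rm ham} Bf = p^T M^{-1} \nabla_q Bf$, whose squared $L^2(\nuref)$-norm is controlled by a constant times $\|\nabla_q B f\|^2$ via $\Pi_p(pp^T) = M/\beta$; the estimate closes. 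For $[B^*, \partial_z]$, using $B^* = -\cL_{\rm ham}\Pi_p R$ (by self-adjointness of $R$ and $\Pi_p$) produces the analogous decomposition $[B^*, \partial_z] = -[\cL_{\rm ham},\partial_z]\Pi_p R - \cL_{\rm ham}\Pi_p [R,\partial_z]$; the first summand vanishes identically since $[\cL_{\rm ham},\partial_z]$ ends in $\nabla_p$ and $\nabla_p \Pi_p = 0$, while the second is bounded by the same ingredients, using again that $\cL_{\rm ham}$ applied to a $p$-independent function equals $p^T M^{-1}$ times its gradient in $q$. The conceptual obstacle throughout is that $B$ is a bounded composition of individually unbounded operators, so the commutators with $\partial_z$ only make sense after the above resolvent rearrangement, which reveals the cancellation that makes everything bounded.
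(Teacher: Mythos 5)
Your proof is correct and follows the same overall plan as the paper: isolate the $z$-dependence via the resolvent identity for $R=(1+(\beta m)^{-1}\nabla_q^*\nabla_q)^{-1}$ and the commutator $[\cL_{\rm ham},\partial_z]=\nabla_q(\partial_zU)^T\nabla_p$, and then bound the resulting pieces. The difference lies in how you bound them. The paper writes $[\partial_z,R]\Pi_p\cL_{\rm ham}=ST$ with $S=\frac1m R\nabla_q(\partial_zU)^T\nabla_q$ and $T=R\Pi_p\cL_{\rm ham}$, and shows $S$ and $T$ are bounded by spectral calculus (factoring $\widetilde S\widetilde S^*=F(\nabla_q^*\nabla_q)$ for $\widetilde S=R\nabla_q^*$, and $TT^*$ via the identity $-\Pi_p\cL_{\rm ham}^2\Pi_p=\frac1\beta\nabla_q^*M^{-1}\nabla_q$); it also bounds $\Pi_p\nabla_p$ by writing its action in a Hermite basis. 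You instead keep the second summand as $R(\partial_zT)B$ and bound $\nabla_qB$ directly by an energy estimate closed via $\cL_{\rm ham}^*=-\cL_{\rm ham}$ and the fact that $\cL_{\rm ham}$ on $p$-independent functions reduces to $p^TM^{-1}\nabla_q$; you bound $\Pi_p\nabla_p$ by Gaussian integration by parts instead of Hermite functions. Your argument is thus slightly more elementary and self-contained, relying only on the anti-symmetry of $\cL_{\rm ham}$ and elliptic coercivity rather than on spectral-calculus factorizations $SS^*$, $TT^*$. You also spell out the dual identity $B^*=-\cL_{\rm ham}\Pi_pR$ and the observation that the $[\cL_{\rm ham},\partial_z]$-term vanishes against $\Pi_p$ because $\nabla_p\Pi_p=0$; the paper only remarks that the $B^*$ case is "similar". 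Both routes exploit the same algebraic cancellations and lead to the same conclusion, so this is a valid alternative proof.
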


The proof of the first inequality is postponed to Section~\ref{sec:commutator}; the
second one is easily checked by a direct computation.
 
\begin{lemma}
  \label{lem:marginal_coercivity}
  For any $\alpha > 0$, there is $\eta>0$ and $C_\alpha > 0$ such that, for all smooth and compactly supported function~$f \in L^2_0(\nuref)$,
  \[
    \|\Pi_z f\|_{L^2(\nuref)}^2 \leq
    \frac{1}{R_{\overline{\beta}}^2-\alpha}
    \left( \partial_z^* \partial_z f, f \right)_\eta + C_\alpha
      \left\|(1-\Pi_z)f\right\|_{L^2(\nuref)}^2.
  \]
  For overdamped dynamics, it is possible to consider $\alpha = 0$ and $\eta = 0$.
\end{lemma}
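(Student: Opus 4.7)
\smallskip

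The plan is to use the Poincar\'e inequality~\eqref{eq:Poincare_marginal} on the marginal measure $\onuref$, applied to the function $\Pi_z f$ which depends only on~$z$. First, since $f \in L^2_0(\nuref)$, Fubini gives
\[
\int_{\cD_z} (\Pi_z f)(z)\, \onuref(dz) = \int_\cE f \, d\nuref = 0,
\]
so~\eqref{eq:Poincare_marginal} yields
\[
\|\Pi_z f\|^2_{L^2(\nuref)} = \|\Pi_z f\|^2_{L^2(\onuref)} \leq \frac{1}{R_{\overline{\beta}}^2} \|\partial_z \Pi_z f\|^2_{L^2(\onuref)}.
\]
The next step is to replace $\partial_z \Pi_z f$ by $\Pi_z \partial_z f$ using the commutation identity~\eqref{eq:commutation_Pi_z}. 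A direct computation using $\Pi_z \partial_z \Uk = \Ak'(z)$ shows $\Pi_z(\partial_z \Uk - \Ak') = 0$, hence
\[
\Pi_z[(\partial_z \Uk - \Ak')f] = \Pi_z[(\partial_z \Uk - \Ak')(1-\Pi_z)f].
\]
Since $\partial_z \Uk - \Ak'$ is uniformly bounded (here we use compactness of $\cD_q \times \cD_z$ and smoothness of~$U$) and $|\Pi_z g|^2 \leq \Pi_z(g^2)$ by Cauchy--Schwarz, this gives $\|\Pi_z[(\partial_z \Uk - \Ak')f]\|_{L^2(\onuref)} \leq K\|(1-\Pi_z)f\|_{L^2(\nuref)}$ for some $K>0$.

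Combining this with $\|\Pi_z \partial_z f\|_{L^2(\onuref)} \leq \|\partial_z f\|_{L^2(\nuref)}$ and applying Young's inequality $(a+b)^2 \leq (1+\kappa)a^2 + (1+\kappa^{-1})b^2$, one obtains
\[
\|\Pi_z f\|^2_{L^2(\nuref)} \leq \frac{1+\kappa}{R_{\overline{\beta}}^2} \|\partial_z f\|^2_{L^2(\nuref)} + \widetilde{C}_\kappa \|(1-\Pi_z)f\|^2_{L^2(\nuref)}.
\]
For the overdamped case, $\eta = 0$ and $(\partial_z^*\partial_z f, f)_0 = \|\partial_z f\|^2_{L^2(\nuref)}$, so choosing $\kappa$ such that $(1+\kappa)/R_{\overline{\beta}}^2 \leq 1/R_{\overline{\beta}}^2$ (namely $\kappa = 0$, $\alpha = 0$) settles the claim.

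The main obstacle is handling the inertial case, where the modified scalar product produces correction terms
\[
(\partial_z^*\partial_z f, f)_\eta = \|\partial_z f\|^2_{L^2(\nuref)} - \eta \langle B\partial_z^*\partial_z f, f\rangle_{L^2(\nuref)} - \eta \langle \partial_z^* \partial_z f, Bf\rangle_{L^2(\nuref)}.
\]
I would rewrite the two correction terms by moving one $\partial_z$ across via $\partial_z B = B\partial_z - [B,\partial_z]$ (and similarly for $B^*$), use $\|B\|_{\mathcal{B}(L^2(\nuref))},\|B^*\|_{\mathcal{B}(L^2(\nuref))} \leq 1/2$ together with the commutator bounds of Lemma~\ref{lem:commutation_B_partial_z}, and apply Cauchy--Schwarz plus another Young inequality to get
\[
\|\partial_z f\|^2_{L^2(\nuref)} \leq (1 + c_1 \eta)(\partial_z^*\partial_z f, f)_\eta + c_2 \eta \|f\|^2_{L^2(\nuref)},
\]
for constants $c_1,c_2$ independent of $\eta$ and~$f$. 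Substituting this into the previous bound, splitting $\|f\|^2 = \|\Pi_z f\|^2 + \|(1-\Pi_z)f\|^2$ via~\eqref{eq:decomposition_f_Pi_z_1_Pi_z}, and absorbing the resulting $O(\eta)\|\Pi_z f\|^2$ into the left-hand side, one obtains the desired estimate provided $\eta$ is chosen small enough (depending on $\alpha$) so that both the coefficient in front of $(\partial_z^*\partial_z f, f)_\eta$ is at most $1/(R_{\overline{\beta}}^2 - \alpha)$ and the absorption is possible. This gives the stated $\eta = \eta(\alpha)$ and $C_\alpha$.
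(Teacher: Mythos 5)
Your proposal is correct and follows essentially the same route as the paper: Poincar\'e inequality~\eqref{eq:Poincare_marginal} for $\Pi_z f$, the commutation identity~\eqref{eq:commutation_Pi_z}, the annihilation property $\Pi_z(\partial_z U - \Ak')=0$, Young's inequality, and -- for the inertial case -- control of the $\eta$-correction terms via the commutator bounds of Lemma~\ref{lem:commutation_B_partial_z} followed by absorption of the $O(\eta)\|\Pi_z f\|^2$ remainder. One small slip: your justification of the overdamped $\alpha=0$ case by ``taking $\kappa=0$'' does not work, since Young's inequality then yields an infinite constant $C_\alpha$; squaring $\|\Pi_z f\|\leq R_{\overline{\beta}}^{-1}\bigl(\|\partial_z f\| + K\|(1-\Pi_z)f\|\bigr)$ inevitably produces a cross term that cannot be absorbed into $\|(1-\Pi_z)f\|^2$ with a finite constant. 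The paper is equally terse on this secondary point (and only the $\alpha>0$ statement is actually invoked downstream, in Corollary~\ref{corr:global_coercivity}), so this does not affect the substance of your argument.
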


\begin{proof}
  Using first the Poincar\'e inequality~\eqref{eq:Poincare_marginal}
  (since $f$ is of zero average with respect to $\nuref$ hence $\Pi_z f$
  has zero average with respect to $\onuref$),
  and then~\eqref{eq:commutation_Pi_z},
\[
\begin{aligned}
\|\Pi_z f\|_{L^2(\nuref)} & 
\leq \frac{1}{R_{\overline{\beta}}} \|\partial_z \Pi_z f\|_{L^2(\nuref)} \\
& \leq \frac{1}{R_{\overline{\beta}}} \left[ \|\Pi_z \partial_z
  f\|_{L^2(\nuref)} 
+ \beta \left\|\Pi_z\left((\partial_z \Uk-\Ak') f\right)\right\|_{L^2(\nuref)}\right] \\
& \leq \frac{1}{R_{\overline{\beta}}} \left[ \| \partial_z
  f\|_{L^2(\nuref)} 
+ \beta \left\|\Pi_z(\partial_z \Uk-\Ak')(1-\Pi_z)f\right\|_{L^2(\nuref)} \right],
\end{aligned}
\]
where we used the fact that $\Pi_z (\partial_z \Uk - \Ak')\Pi_z = 0$
since $\partial_z \Uk - \Ak'$ has zero average with respect to
$\rme^{-\beta (\Uk(q,z)-\Ak(z))} \, dq$. Therefore,
\begin{equation}
\label{eq:estimate_Pi_z_partial_z}
\|\Pi_z f\|_{L^2(\nuref)} \leq \frac{1}{R_{\overline{\beta}}} \left[ \| \partial_z
  f\|_{L^2(\nuref)} + \beta \left\|\partial_z \Uk-\Ak'\|_{L^\infty} \|(1-\Pi_z)f\right\|_{L^2(\nuref)} \right],
\end{equation}
which already gives the result for overdamped dynamics.

For inertial Langevin, we use~\eqref{eq:scalar_product} to write
\begin{equation}
  \label{eq:bound_Pi_z_f}
  \|\Pi_z f\|_{L^2(\nuref)}\leq \frac{1}{R_{\overline{\beta}}} 
  \left[ \frac{\dps\left( \partial_z f, \partial_z f
      \right)_\eta^{1/2}}{1-\eta} 
    + \beta \|\partial_z \Uk-\Ak'\|_{L^\infty} \left\|(1-\Pi_z)f\right\|_{L^2(\nuref)} \right].
\end{equation}
We next note that 
\[
\Big( \partial_z f, \partial_z f \Big)_\eta = \Big( \partial_z^* \partial_z f, f \Big)_\eta - \eta \left\langle [B,\partial_z]f, \partial_z f \right\rangle_{L^2(\nuref)} - \eta \left\langle \partial_z f, [B^*,\partial_z]f \right\rangle_{L^2(\nuref)}.
\]
In view of Lemma~\ref{lem:commutation_B_partial_z},
\[
\begin{aligned}
  & \Big( \partial_z f, \partial_z f \Big)_\eta\\
  & \leq \Big( \partial_z^* \partial_z f, f \Big)_\eta 
  + 2 \eta K_{\rm c} \| \partial_z f\|_{L^2(\nuref)} \| f\|_{L^2(\nuref)} \\
  & \leq \Big( \partial_z^* \partial_z f, f \Big)_\eta 
  + \eta K_{\rm c} \left[ \| \partial_z f\|^2_{L^2(\nuref)} 
    + \| \Pi_z f\|_{L^2(\nuref)}^2 + \| (1-\Pi_z) f\|_{L^2(\nuref)}^2 \right] \\
  & \leq \Big( \partial_z^* \partial_z f,f \Big)_\eta 
  + \eta(1+\eta)K_{\rm c}\Big( \partial_z f, \partial_z f \Big)_\eta \\
  & \qquad + \eta K_{\rm c} \left[ \| \Pi_z f\|_{L^2(\nuref)}^2 + \| (1-\Pi_z) f\|_{L^2(\nuref)}^2 \right],
\end{aligned}
\]
so that, for $\eta > 0$ sufficiently small,
\[
\begin{aligned}
& \Big( \partial_z f, \partial_z f \Big)_\eta \\
& \leq \frac{1}{1-\eta(1+\eta)K_{\rm c}}\left[\Big( \partial_z^* \partial_z f,f \Big)_\eta + \eta K_{\rm c} \left(\| \Pi_z f\|_{L^2(\nuref)}^2 + \| (1-\Pi_z) f\|_{L^2(\nuref)}^2\right) \right].
\end{aligned}
\]
The final result then immediately follows from this inequality
and~\eqref{eq:bound_Pi_z_f}, upon taking $\eta>0$ sufficiently small.
\end{proof}

We finally give estimates on the non-symmetric part of the adjoint of
\[
\cL_1 = -\frac{1}{\overline{\beta}} \partial_z^* \partial_z + 
\left(\frac{\beta}{\overline{\beta}}-1\right) (\partial_z \Uk - \Ak')\partial_z,
\]
which is proportional to the adjoint of $\wcL = (\partial_z \Uk - \Ak')\partial_z$.

\begin{lemma}
  \label{lem:extra_term_to_control}
  For any $\alpha > 0$, there is $\eta > 0$ and $C_\alpha \in \R_+$
  such that, for all smooth and compactly supported functions~$f$, 
  \[
    \left| \left( \wcL^* f, f\right)_\eta \right| \leq \alpha
    \left(\left\| f \right\|_{L^2(\nuref)}^2 + \left\| \partial_z f
      \right\|_{L^2(\nuref)}^2\right) + C_\alpha
    \|(1-\Pi_z)f\|_{L^2(\nuref)}^2.
  \]
\end{lemma}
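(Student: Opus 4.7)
The plan is to split
\[
\bigl(\wcL^* f, f\bigr)_\eta = \langle f, \wcL f\rangle_{L^2(\nuref)} - \eta \bigl\langle Bf, \wcL f\bigr\rangle_{L^2(\nuref)} - \eta \bigl\langle \wcL^* f, Bf\bigr\rangle_{L^2(\nuref)},
\]
treat the main $L^2(\nuref)$-term carefully, and absorb the two $\eta$-corrections by choosing $\eta$ small. The driving fact behind every estimate is that $\Ak'(z)$ is defined so that $\Pi_z[\partial_z \Uk - \Ak'] = 0$, so that $\wcL$ has vanishing ``$\Pi_z$-average component'', and the proof reduces to extracting $(1-\Pi_z)f$ factors via integration by parts in $z$ and the commutator identity~\eqref{eq:commutation_Pi_z}.

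For the main term I first decompose the left $f$ as $\Pi_z f + (1-\Pi_z)f$. The $(1-\Pi_z)f$ piece is immediate: boundedness of $\partial_z \Uk - \Ak'$ on the compact domain, Cauchy--Schwarz and Young yield a bound of the form $\tfrac{\alpha}{2}\|\partial_z f\|_{L^2(\nuref)}^2 + C_\alpha\|(1-\Pi_z)f\|_{L^2(\nuref)}^2$. The delicate term is $I = \langle \Pi_z f, \wcL f\rangle$. Integrating by parts in $z$ via $\partial_z^* = -\partial_z + \beta \partial_z \Uk + (\overline{\beta}-\beta)\Ak'$ rewrites it as $\langle \partial_z^*[(\Pi_z f)(\partial_z \Uk - \Ak')], f\rangle$, after which I split the right $f$ into $\Pi_z f + (1-\Pi_z)f$. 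The $\Pi_z f$ piece vanishes, since reintegrating by parts once more it becomes $\langle (\Pi_z f)(\partial_z \Uk - \Ak'), \partial_z \Pi_z f\rangle$ and both $\Pi_z f$ and $\partial_z \Pi_z f$ depend only on $z$, while $\Pi_z[\partial_z \Uk - \Ak'] = 0$. One is then left with $\langle \partial_z^*[(\Pi_z f)(\partial_z \Uk - \Ak')], (1-\Pi_z)f\rangle$, and expanding $\partial_z^*$ produces three contributions, each containing either $\Pi_z f$ multiplied by a bounded function of $z$, or $(\partial_z \Pi_z f)(\partial_z \Uk - \Ak')$. Using the commutator identity~\eqref{eq:commutation_Pi_z} together with $\Pi_z[(\partial_z \Uk - \Ak')\Pi_z f]=0$, one obtains
\[
\|\partial_z \Pi_z f\|_{L^2(\nuref)} \leq \|\partial_z f\|_{L^2(\nuref)} + \beta\|\partial_z \Uk - \Ak'\|_{L^\infty}\|(1-\Pi_z)f\|_{L^2(\nuref)},
\]
so that Cauchy--Schwarz and Young deliver $|I| \leq \tfrac{\alpha}{2}(\|f\|^2 + \|\partial_z f\|^2) + C_\alpha \|(1-\Pi_z)f\|^2$.

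For the $\eta$-corrections, the direct expansion $\wcL^* = -(\partial_z^2 \Uk - \Ak'') - (\partial_z \Uk - \Ak')\partial_z + [\beta \partial_z \Uk + (\overline{\beta} - \beta)\Ak'](\partial_z \Uk - \Ak')$ (all coefficients bounded on the compact domain) yields $\|\wcL^* f\|_{L^2(\nuref)} \leq C(\|f\|_{L^2(\nuref)} + \|\partial_z f\|_{L^2(\nuref)})$; combined with $\|B\|_{\mathcal{B}(L^2(\nuref))} \leq \tfrac{1}{2}$ and Young, each correction contributes at most $\eta C(\|f\|^2 + \|\partial_z f\|^2)$, absorbed into $\alpha(\|f\|^2+\|\partial_z f\|^2)$ by choosing $\eta$ small enough. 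The main obstacle is the treatment of $I$: one must realize that after the first integration by parts the $\Pi_z f$ component of the test function on the right drops out, so that every surviving contribution genuinely carries a $(1-\Pi_z)f$ factor, and the only potentially problematic piece $(\partial_z \Pi_z f)(\partial_z \Uk - \Ak')$ is itself tamed by the commutator identity, trading the missing $\Pi_z f$-control for a benign $(1-\Pi_z)f$-tail.
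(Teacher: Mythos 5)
Your proposal is correct and follows essentially the same route as the paper's proof: both reduce $\left(\wcL^* f, f\right)_\eta$ to $\langle \wcL f, f\rangle_{L^2(\nuref)}$ plus $\eta$-corrections, extract a $(1-\Pi_z)f$ factor from the dangerous cross term by integration by parts in $z$, use $\Pi_z[\partial_z \Uk - \Ak']=0$ to kill the surviving $\Pi_z$-on-$\Pi_z$ contribution, control $\|\partial_z\Pi_z f\|_{L^2(\nuref)}$ via the commutator identity~\eqref{eq:commutation_Pi_z}, and bound the $\eta$-terms crudely using $\|B\|_{\mathcal{B}(L^2(\nuref))}\leq\tfrac12$ and boundedness of the coefficients. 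Your decomposition of the left argument rather than the right one is cosmetically different but lands on the same expression $\langle \partial_z^*[(\Pi_z f)(\partial_z\Uk-\Ak')], (1-\Pi_z)f\rangle_{L^2(\nuref)}$; and your initial splitting of the $\eta$-corrections (writing $-\eta\langle Bf,\wcL f\rangle$ rather than the definitional $-\eta\langle B\wcL^*f, f\rangle$) is a small slip, but both forms admit the same crude $O(\eta)(\|f\|_{L^2}^2+\|\partial_z f\|_{L^2}^2)$ bound, so the argument is unaffected.
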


\begin{proof}
  We start by noting that 
  \[
  \begin{aligned}
    \left \langle \wcL^* f, f \right\rangle_{L^2(\nuref)} & 
    = \left \langle \wcL f, f \right\rangle_{L^2(\nuref)} \\
    & = \left\langle (\partial_z \Uk - \Ak')\partial_z f, 
      (1-\Pi_z)f\right\rangle_{L^2(\nuref)} \\
    & \quad
    + \left\langle \partial_z f, (\partial_z \Uk - \Ak')\Pi_zf\right\rangle_{L^2(\nuref)},
  \end{aligned}
  \]
  the first term being controlled by
  $\|\partial_z f\|_{L^2(\nuref)} \|\partial_z \Uk - \Ak'\|_{L^\infty}
  \|(1-\Pi_z)f\|_{L^2(\nuref)}$. For the second one, we use the commutation
  rule~\eqref{eq:commutation_Pi_z} and the orthogonality of $\Pi_z$ and $1-\Pi_z$ in $L^2(\nuref)$ 
  to write
  \[
  \begin{aligned}
    \left\langle \partial_z f, (1-\Pi_z) h
    \right\rangle_{L^2(\nuref)} & = \left\langle (1-\Pi_z)\partial_z
    f, (1-\Pi_z) h \right\rangle_{L^2(\nuref)} \\
    & = \left\langle \partial_z (1-\Pi_z) f, (1-\Pi_z) h
    \right\rangle_{L^2(\nuref)}.
    \end{aligned}
  \]
  Applying this result to $h = (\partial_z \Uk - \Ak')\Pi_z f$ (which is such that $(1-\Pi_z)h = h$
  since $\Pi_z(\partial_z \Uk - \Ak') = 0$),
  \[
  \begin{aligned}
    \left\langle \partial_z f, (\partial_z \Uk - \Ak')\Pi_zf\right\rangle_{L^2(\nuref)} & = \left\langle \partial_z (1-\Pi_z) f, (\partial_z \Uk - \Ak')\Pi_zf\right\rangle_{L^2(\nuref)} \\
    & = \left\langle (1-\Pi_z) f, \partial_z^* \left[(\partial_z \Uk - \Ak')\Pi_zf \right]\right\rangle_{L^2(\nuref)},
  \end{aligned}
  \]
so that 
\begin{equation}
  \label{eq:estimate_wcL*}
  \left|\left\langle \partial_z f, (\partial_z \Uk - \Ak')
      \Pi_zf\right\rangle_{L^2(\nuref)}\right| \leq 
  \left\| (1-\Pi_z) f \right\|_{L^2(\nuref)} \left\|\wcL^* \Pi_zf \right\|_{L^2(\nuref)}. 
\end{equation}
Noting that
\begin{equation}
\label{eq:def_W0}
\wcL^* g = -\wcL g + W, 
\end{equation}
where
\begin{equation}
  \label{eq:def_W} 
  W = -(\partial^2_z \Uk - \Ak'') + (\beta \partial_z \Uk 
  + (\overline{\beta}-\beta)\Ak')(\partial_z \Uk - \Ak'),
\end{equation}
the last factor on the right-hand side of~\eqref{eq:estimate_wcL*} can be bounded as  
(using again~\eqref{eq:commutation_Pi_z})
\[
\begin{aligned}
  \left\| \wcL^* \Pi_zf \right\|_{L^2(\nuref)} 
  & \leq \left\| (\partial_z \Uk - \Ak') \partial_z \Pi_zf
  \right\|_{L^2(\nuref)} 
  + \left\| W \right\|_{L^\infty} \|\Pi_zf\|_{L^2(\nuref)} \\
  & \leq \left\| \partial_z \Uk - \Ak' \right\|_{L^\infty} 
  \left\| \partial_z \Pi_z f \right\|_{L^2(\nuref)} 
  +  \left\| W \right\|_{L^\infty} \|\Pi_zf\|_{L^2(\nuref)} \\
  & \leq \left\| \partial_z \Uk - \Ak' \right\|_{L^\infty} 
  \left[\left\| \partial_z f \right\|_{L^2(\nuref)} 
    + \beta \left\| \partial_z \Uk - \Ak' \right\|_{L^\infty} 
    \left\| f \right\|_{L^2(\nuref)}\right]\\
  & \quad +  \left\| W \right\|_{L^\infty} \|\Pi_zf\|_{L^2(\nuref)}.
\end{aligned}
\]

The two extra terms in the scalar product $(\cdot,\cdot)_\eta$ for the inertial dynamics
can be estimated in a crude manner as
\[
\begin{aligned}
& \left|\left \langle B \wcL^* f, f
  \right\rangle_{L^2(\nuref)}\right|\\
& \qquad \leq \frac12 \left\|\wcL^* f\right\|_{L^2(\nuref)}\left\|f \right\|_{L^2(\nuref)} \\
& \qquad \leq \frac12 \left( \|\partial_z \Uk - \Ak'\|_{L^\infty} \left\|\partial_z f \right\|_{L^2(\nuref)} + \left\| W \right\|_{L^\infty} \left\|f \right\|_{L^2(\nuref)}\right)\left\|f \right\|_{L^2(\nuref)},
\end{aligned}
\]
and the same upper bound for the other term
$|\langle \wcL^* f, Bf \rangle_{L^2(\nuref)}\|$. In conclusion,
\[
\begin{aligned}
  \left| \Big( \wcL^* f, f\Big)_\eta \right| & 
  \leq \|\partial_z f\|_{L^2(\nuref)} \|\partial_z \Uk -
  \Ak'\|_{L^\infty} 
  \|(1-\Pi_z)f\|_{L^2(\nuref)} \\
  & \quad + \left\| \partial_z \Uk - \Ak' \right\|_{L^\infty} 
  \left\| \partial_z f \right\|_{L^2(\nuref)} \|(1-\Pi_z)f\|_{L^2(\nuref)} \\
  & \quad + \left[ \beta \left\| \partial_z \Uk - \Ak' \right\|^2_{L^\infty} 
    + \|W\|_{L^\infty} \right] \left\| f \right\|_{L^2(\nuref)}\|(1-\Pi_z)f\|_{L^2(\nuref)}\\
  & \quad + \eta \|\partial_z \Uk - \Ak'\|_{L^\infty} 
  \left\|\partial_z f \right\|_{L^2(\nuref)} \left\|f \right\|_{L^2(\nuref)} 
  + \eta \left\| W \right\|_{L^\infty} \left\|f \right\|_{L^2(\nuref)}^2.
\end{aligned}
\]
The final estimate follows by Young's inequality, upon taking $\eta>0$
sufficiently small.
\end{proof}

As a consequence of the previous results, we obtain the following key
coercivity property for the complete dynamics.

\begin{corollary}
  \label{corr:global_coercivity}
  For any $\varepsilon > 0$, there is $\eta>0$ and
  $\delta_\varepsilon > 0$ such that, for all smooth and compactly
  supported function~$f \in L^2_0(\nuref)$,
  \[
  \forall \delta \in (0,\delta_\varepsilon), \qquad 
  \left( -\sL_\delta^* f,f \right)_\eta \geq (\lambda-\varepsilon) \left(f,f \right)_\eta,
  \]
  where $\lambda$ is defined in~\eqref{eq:def_lambda}.
\end{corollary}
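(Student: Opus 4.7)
The plan is to split the quadratic form $(-\sL_\delta^* f, f)_\eta$ using $\sL_\delta = \frac{1}{\delta}\cL_0 + \cL_1$ together with the decomposition $\cL_1 = -\frac{1}{\overline{\beta}}\partial_z^*\partial_z + (\beta/\overline{\beta}-1)\wcL$, yielding
\begin{equation*}
(-\sL_\delta^* f, f)_\eta = \frac{1}{\delta}(-\cL_0^* f, f)_\eta + \frac{1}{\overline{\beta}}(\partial_z^*\partial_z f, f)_\eta - \left(\frac{\beta}{\overline{\beta}}-1\right)(\wcL^* f, f)_\eta.
\end{equation*}
Lemma~\ref{lem:conditioned_coercivity} bounds the first summand from below by $\frac{(1-\eta)\rho}{\delta}\|(1-\Pi_z)f\|^2_{L^2(\nuref)}$. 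Rearranging Lemma~\ref{lem:marginal_coercivity} with a small parameter $\alpha_1>0$ bounds the second summand from below by
\begin{equation*}
\frac{R_{\overline{\beta}}^2 - \alpha_1}{\overline{\beta}}\|\Pi_z f\|^2_{L^2(\nuref)} - \frac{(R_{\overline{\beta}}^2-\alpha_1)C_{\alpha_1}}{\overline{\beta}}\|(1-\Pi_z)f\|^2_{L^2(\nuref)},
\end{equation*}
whose leading constant is arbitrarily close to $\lambda$. For the third summand, Lemma~\ref{lem:extra_term_to_control} with a small parameter $\alpha_2>0$ provides the upper bound $\alpha_2\bigl(\|f\|^2_{L^2(\nuref)} + \|\partial_z f\|^2_{L^2(\nuref)}\bigr) + C_{\alpha_2}\|(1-\Pi_z)f\|^2_{L^2(\nuref)}$.

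The piece $\alpha_2\|\partial_z f\|^2_{L^2(\nuref)}$ must be reabsorbed before combining. Rerunning the intermediate estimate used inside the proof of Lemma~\ref{lem:marginal_coercivity}---which relies on $\|B\|_{\cB(L^2(\nuref))} \leq 1/2$ and on the commutator bound from Lemma~\ref{lem:commutation_B_partial_z}---yields, for $\eta$ small enough, a constant $C_\eta$ such that $\|\partial_z f\|^2_{L^2(\nuref)} \leq C_\eta\bigl((\partial_z^*\partial_z f, f)_\eta + \|f\|^2_{L^2(\nuref)}\bigr)$. Taking $\alpha_2$ sufficiently small, this piece then eats only a harmless fraction of the coercive term $\frac{1}{\overline{\beta}}(\partial_z^*\partial_z f, f)_\eta$ together with an $O(\alpha_2)$ piece of $\|f\|^2_{L^2(\nuref)}$.

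Putting these bounds together and using $\|f\|^2_{L^2(\nuref)} = \|\Pi_z f\|^2_{L^2(\nuref)} + \|(1-\Pi_z)f\|^2_{L^2(\nuref)}$ leads to an inequality of the shape
\begin{equation*}
(-\sL_\delta^* f, f)_\eta \geq \left(\frac{(1-\eta)\rho}{\delta} - K\right)\|(1-\Pi_z)f\|^2_{L^2(\nuref)} + (\lambda - \alpha_3)\|\Pi_z f\|^2_{L^2(\nuref)},
\end{equation*}
where $K = K(\eta,\alpha_1,\alpha_2)$ is finite and $\alpha_3 \to 0$ as $\eta,\alpha_1,\alpha_2 \to 0$. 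Given $\varepsilon>0$, the strategy is to first pick $\eta,\alpha_1,\alpha_2$ small enough that $\alpha_3$, together with the loss coming from the $(1+\eta)$-factor in the equivalence~\eqref{eq:scalar_product}, stays below $\varepsilon$, and then take $\delta_\varepsilon > 0$ small enough that the coefficient of $\|(1-\Pi_z)f\|^2_{L^2(\nuref)}$ also exceeds $\lambda - \varepsilon$. Converting the resulting $\|f\|^2_{L^2(\nuref)}$-bound into a $(f,f)_\eta$-bound via~\eqref{eq:scalar_product} then yields the claimed estimate. The main technical obstacle is precisely this bookkeeping: $\wcL^*$ is the only term producing a derivative $\partial_z f$ with no accompanying coercivity, so the auxiliary control of $\|\partial_z f\|^2_{L^2(\nuref)}$ via Lemma~\ref{lem:commutation_B_partial_z} is essential, and it must be used without spoiling the sharp constant $R_{\overline{\beta}}^2/\overline{\beta}$ in front of $\|\Pi_z f\|^2_{L^2(\nuref)}$.
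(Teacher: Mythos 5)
Your proposal follows essentially the same route as the paper's proof: split $(-\sL_\delta^* f, f)_\eta$ into the $\delta^{-1}\cL_0$, $\overline{\beta}^{-1}\partial_z^* \partial_z$, and $\wcL^*$ contributions, apply Lemmas~\ref{lem:conditioned_coercivity}, \ref{lem:marginal_coercivity}, and~\ref{lem:extra_term_to_control} respectively, absorb the spurious $\|\partial_z f\|_{L^2(\nuref)}^2$ from the $\wcL^*$ bound back into a small fraction of the coercive $(\partial_z^*\partial_z f, f)_\eta$ term (using the commutator control in Lemma~\ref{lem:commutation_B_partial_z} and the equivalence~\eqref{eq:scalar_product}), and finally let $\delta$ be small so that the $\delta^{-1}\rho$ coefficient on $\|(1-\Pi_z)f\|_{L^2(\nuref)}^2$ dominates all the constants $C_\alpha$, $K$ produced along the way. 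This is exactly what the paper does, and in fact you make explicit one step that the paper states tersely (how the residual $-\alpha\|\partial_z f\|_{L^2(\nuref)}^2$ is dominated without spoiling the constant $\lambda$), so your version is a faithful and slightly more detailed account of the same argument.
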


\begin{proof}
The proof of this result is based on Lemmas~\ref{lem:conditioned_coercivity}, \ref{lem:marginal_coercivity} and~\ref{lem:extra_term_to_control}. The value of $\eta>0$ is first fixed by Lemmas~\ref{lem:marginal_coercivity} and~\ref{lem:extra_term_to_control}. More precisely, for any $\alpha > 0$, there exist $\eta > 0$ and a constant $C_\alpha > 0$ such that
\[
\begin{aligned}
\left(-\cL_1^* f,f\right)_\eta & \geq (\lambda - \alpha) \|\Pi_z f\|_{L^2(\nuref)}^2 - \alpha\left(\|\partial_z f\|_{L^2(\nuref)}^2 + \|f\|_{L^2(\nuref)}^2\right) \\ 
& \quad - C_\alpha \|(1-\Pi_z) f\|_{L^2(\nuref)}^2.
\end{aligned}
\]
We next use $\|f\|_{L^2(\nuref)}^2 \leq (1+\eta)(f,f)_\eta$ as well as the bound~\eqref{eq:estimate_Pi_z_partial_z} and~\eqref{eq:decomposition_f_Pi_z_1_Pi_z}. This shows that, for any $\varepsilon>0$, there exist $\eta>0$ and $C>0$ such that 
\[
\left(-\cL_1^* f,f\right)_\eta \geq \left(\lambda - \frac{\varepsilon}{2}\right) \|\Pi_z f\|_{L^2(\nuref)}^2 - C \|(1-\Pi_z) f\|_{L^2(\nuref)}^2.
\]
Choosing $\delta > 0$ sufficiently small in Lemma~\ref{lem:conditioned_coercivity}, 
\[
\begin{aligned}
\left(-\sL_\delta^* f,f\right)_\eta & \geq \left(\lambda - \frac{\varepsilon}{2}\right) \left( \|\Pi_z f\|_{L^2(\nuref)}^2 + \|(1-\Pi_z) f\|_{L^2(\nuref)}^2\right) \\
& \geq (1-\eta)\left(\lambda - \frac{\varepsilon}{2}\right) (f,f)_\eta,
\end{aligned}
\]
from which the conclusion follows by possibly further decreasing $\eta$ and $\delta$.
\end{proof}

We can now finally conclude the proof:

\begin{proof}[Proof of Theorem~\ref{thm:unif_exp_cv}]
  Fix $f_0,g_0 \in L^2(\nuref)$ such that $f_0,g_0$ both have
  average~1 with respect to $\nuref$ (the general case where $f_0,g_0$ have
  the same average but for a value different from~1 is directly
  obtained by linearity), and set
  $f(t) = \rme^{t \sL_\delta^*} f_0$ and
  $g(t) = \rme^{t \sL_\delta^*} g_0$. 
  %Upon distinguishing real and imaginary parts of $f_0,g_0$, it is possible to assume
  %without loss of generality that $f_0,g_0$ are real valued.
  Note that $f(t)-g(t)$ has zero
  average with respect to $\nuref$ for all $t \geq 0$, as can be seen
  for instance by computing the time derivative:
\[
  \begin{aligned}
    \frac{d}{dt}\left[\int_\cE (f(t)-g(t)) \, d\nuref\right] & = \int_\cE
    \sL_\delta^*(f(t)-g(t))\, d\nuref\\
    & = \int_\cE (f(t)-g(t))(\sL_\delta
    \bone) \, d\nuref = 0.
  \end{aligned}
\]
Therefore, for the values $\varepsilon, \eta,\delta$ given by Corollary~\ref{corr:global_coercivity},
  \[
  \begin{aligned}
    \frac{d}{dt}\left[\frac12 \Big(f(t)-g(t),f(t)-g(t)\Big)_\eta \right] 
    & = \Big( \sL_\delta^* [f(t)-g(t)], f(t)-g(t)\Big)_\eta \\
      & \leq -(\lambda-\varepsilon)\Big(f(t)-g(t),f(t)-g(t)\Big)_\eta.
  \end{aligned}
  \]
  By a Gronwall inequality,
  \[
  \Big( f(t)-g(t),f(t)-g(t)\Big)_\eta \leq \rme^{-2(\lambda -
    \varepsilon)t} 
  \Big( f_0-g_0,f_0-g_0\Big)_\eta.
\]
This contraction estimate allows to prove that there is a unique limiting function $h_\delta \in L^2(\nuref)$, by following the argument provided in~\cite[Section~3.1]{BHM16}. The exponential contraction~\eqref{eq:exp_contraction_to_h_delta} is then obtained by setting $g_0 = h_\delta$. The constant $c_\varepsilon$ in~\eqref{eq:exp_contraction_to_h_delta} is $\sqrt{(1+\eta)/(1-\eta)}$ by the equivalence of norms~\eqref{eq:scalar_product}. It is smaller than~2 for $\eta \leq 3/5$, which can be assumed without loss of generality.
\end{proof}
  
%---------------- commutateurs --------------------------
\subsection{Proof of Lemma~\ref{lem:commutation_B_partial_z}}
\label{sec:commutator}

We show the result for $B\partial_z - \partial_z B$, the proof being
similar for $B^*\partial_z - \partial_z B^*$. Note first that
\[
\begin{aligned}
  \partial_z B & = \partial_z \left(1+\frac{1}{\beta m} \nabla_q^* \nabla_q \right)^{-1} \Pi_p \cL_{\rm ham} \\
  & = \left[\partial_z, \left(1+\frac{1}{\beta m}\nabla_q^* \nabla_q
    \right)^{-1}\right] \Pi_p \cL_{\rm ham} + \left(1+\frac{1}{\beta m}\nabla_q^* \nabla_q \right)^{-1} 
  \partial_z \Pi_p \cL_{\rm ham}.
\end{aligned}
\]
Since $\Pi_p$ and $\partial_z$ commute, 
\[
\partial_z \Pi_p \cL_{\rm ham} = \Pi_p \cL_{\rm ham}\partial_z - \Pi_p \nabla_q(\partial_z \Uk)^T \nabla_p,
\]
so that
\[
[\partial_z,B] = \left[\partial_z, \left(1+\frac{1}{\beta m}\nabla_q^* \nabla_q \right)^{-1}\right] \Pi_p \cL_{\rm ham} - \left(1+\frac{1}{\beta m}\nabla_q^* \nabla_q \right)^{-1}\Pi_p \nabla_q(\partial_z \Uk)^T \nabla_p.
\]

The second operator on the right-hand side is bounded since $\nabla_q(\partial_z U)$ is uniformly bounded and $\Pi_p \nabla_p$ is a bounded operator on $L^2(\nuref)$, as can be seen by writing the action of this operator in a Hermite basis of the momenta variables (as made precise in~\cite{RS17} for instance). To conclude the proof, it remains to show that the first operator on the right-hand side of the last equality is bounded. We use to this end the commutator identity 
\[
\left[\nabla_q^*\nabla_q,\partial_z\right] = \beta \nabla_q(\partial_zU)^T \nabla_q,
\]
so that
\[
\begin{aligned}
& \left[\partial_z, \left(1+\frac{1}{\beta m}\nabla_q^* \nabla_q \right)^{-1}\right] \\
& \quad = \left(1+\frac{1}{\beta m}\nabla_q^* \nabla_q \right)^{-1} \left[1+\frac{1}{\beta m}\nabla_q^* \nabla_q,\partial_z\right] \left(1+\frac{1}{\beta m}\nabla_q^* \nabla_q \right)^{-1} \\
& \quad = \frac{1}{m}\left(1+\frac{1}{\beta m}\nabla_q^* \nabla_q \right)^{-1}\nabla_q(\partial_zU)^T \nabla_q\left(1+\frac{1}{\beta m}\nabla_q^* \nabla_q \right)^{-1}.
\end{aligned}
\]
Therefore,
\[
\left[\partial_z, \left(1+\frac{1}{\beta m}\nabla_q^* \nabla_q \right)^{-1}\right] \Pi_p \cL_{\rm ham} = ST,
\]
with
\[
S = \frac{1}{m}\left(1+\frac{1}{\beta m}\nabla_q^* \nabla_q \right)^{-1}\nabla_q(\partial_zU)^T \nabla_q, \qquad T = \left(1+\frac{1}{\beta m}\nabla_q^* \nabla_q \right)^{-1}\Pi_p\cL_{\rm ham}.
\]
Let us show that both $S$ and $T$ are bounded operators on $L^2(\nuref)$. Since $\nabla_q^* \varphi = - \nabla_q \varphi+ \beta (\nabla U)\varphi$ and $\nabla U, \nabla_q(\partial_zU)$ are uniformly bounded functions, it suffices to prove that the operator 
\[
\widetilde{S} = \left(1+\frac{1}{\beta m}\nabla_q^* \nabla_q \right)^{-1}\nabla_q^*
\]
is bounded in order to conclude that $S$ is bounded. Now, $\widetilde{S} \widetilde{S}^* = F(\nabla_q^* \nabla_q)$ with $F$ bounded, so that $\widetilde{S} \widetilde{S}^*$ is bounded by spectral calculus (see for instance~\cite{ReedSimon1,DautrayLions3}). Therefore,
\[
\left\|\widetilde{S}^* \varphi \right\|_{L^2(\nuref)}^2 = \left\langle \varphi, \widetilde{S} \widetilde{S}^* \varphi \right\rangle_{L^2(\nuref)} \leq \left\| \widetilde{S} \widetilde{S}^* \right\|_{\mathcal{B}(L^2(\nuref))} \|\varphi\|_{L^2(\nuref)}^2.
\]
This shows that $\widetilde{S}^*$ is bounded, hence $\widetilde{S}$ is bounded. For the operator~$T$, we note that (with inequalities in the sense of symmetric operators)
\[
0 \leq -\Pi_p\cL_{\rm ham}^2 \Pi_p = (\Pi_p\cL_{\rm ham})(\Pi_p\cL_{\rm ham})^* = \frac1\beta \nabla_q^* M^{-1} \nabla_q \leq \frac{1}{\beta \overline{m}} \nabla_q^* \nabla_q,
\]
where $M \geq \overline{m} \,\mathrm{Id}$ with $\overline{m} > 0$. Therefore, $TT^*$ is bounded again by spectral calculus, hence $T$ is bounded. 

%---------------------------- resolvent ---------------------------
\subsection{Proof of Corollary~\ref{corr:resolvent_estimates}}
\label{sec:proof_corollary:resolvent_estimates}

Let us first note that~\eqref{eq:exp_contraction_to_h_delta} can be rewritten as 
\[
\begin{aligned}
  & \forall f \in L^2(\nuref), \quad \forall \delta \in
  (0,\delta_\varepsilon], \quad \forall t \geq 0, \\
    & \qquad \qquad \left\|\rme^{t \sL_\delta^*}Q_\delta
    f\right\|_{L^2(\nuref)} \leq c_\varepsilon
    \rme^{-(\lambda-\varepsilon)t}\left\|Q_\delta f
    \right\|_{L^2(\nuref)}.
\end{aligned}
\]
In fact, since $Q_\delta^2 = Q_\delta$ hence $\left\|Q_\delta \right\|_{\mathcal{B}(L^2(\nuref))} \leq 1$,
\begin{equation}
  \label{eq:exp_decay_op_norm}
  \forall \delta \in (0,\delta_\varepsilon], \quad \forall t \geq 0, 
    \qquad 
    \left\|\rme^{t \sL_\delta^*}Q_\delta \right\|_{\mathcal{B}(Q_\delta L^2(\nuref))} \leq c_\varepsilon
    \rme^{-(\lambda-\varepsilon)t}.
\end{equation}
The next point is that $Q_\delta$ and $\rme^{t \sL_\delta^*}$ commute since
\[
  Q_\delta \rme^{t \sL_\delta^*}f = \rme^{\sL_\delta^*}f 
  - \left({\textstyle\int_\cE} \rme^{t \sL_\delta^*}f \, \nuref\right)h_\delta 
  = \rme^{t \sL_\delta^*}f - \left({\textstyle\int_\cE}  f \, \nuref\right)h_\delta 
  = \rme^{t \sL_\delta^*}Q_\delta f.
\]
Therefore, the operator
\[
U_\delta = -\int_0^{\infty} \rme^{t \sL_\delta^*} Q_\delta \, dt
\]
is a well defined bounded operator by~\eqref{eq:exp_decay_op_norm},
and a simple computation shows that
\[
U_\delta \sL_\delta^* = \sL_\delta^* U_\delta = Q_\delta. 
\]
This shows that $\sL_\delta^* Q_\delta$ is invertible on $Q_\delta L^2(\nuref)$ with inverse $U_\delta$, and that
\[
  \begin{aligned}
    \left\| \left(\sL_\delta^* Q_\delta\right)^{-1}
    \right\|_{\mathcal{B}(Q_\delta L^2(\nuref))} & \leq \int_0^{\infty}
    \left\|\rme^{t \sL_\delta^*} Q_\delta
    \right\|_{\mathcal{B}(Q_\delta L^2(\nuref))} \, dt \\
    & \leq c_\varepsilon \int_0^{\infty} \rme^{-(\lambda-\varepsilon)t} \,
    dt,
  \end{aligned}
\]
which leads to the desired operator bound. 

For the estimates on $(\sL_\delta P_\delta)^{-1}$, we note that
$Q_\delta^* = P_\delta$, so that
$(\sL_\delta P_\delta)^* = Q_\delta\sL_\delta^* = \sL_\delta^* Q_\delta$. The bound on
$\left(\sL_\delta^* Q_\delta\right)^{-1}$ therefore immediately
implies the bound on its adjoint
$\left(\sL_\delta P_\delta\right)^{-1}$.

%---------------------------------------------------------------
\section{Proof of Theorem~\ref{thm:inv_meas}}
\label{sec:proof_inv_meas}

In order to guarantee that the correction function $\fh$ is in $L^2(\nuref)$, we rely on the regularity of 
\begin{equation}
  \label{eq:def_g1}
  g_1 = \cL_1^* \bone = \left(\frac{\beta}{\overline{\beta}}-1\right) W \in C^\infty(\cD_q\times\cD_z), 
\end{equation}
where we recall $W$ is defined in~\eqref{eq:def_W}; as well as the following lemma, whose proof is postponed till Sec.~\ref{section:commutator2}.

\begin{lemma}
  \label{lem:reg_commutator_dz}
  The operator $\partial_z (\cL_0^*)^{-1}(1-\Pi_z)$ is bounded from
  $H^1(\nuref)$ to $L^2(\nuref)$, and
  $\partial_z^2 (\cL_0^*)^{-1}(1-\Pi_z)$ is bounded from $H^2(\nuref)$
  to $L^2(\nuref)$. Therefore, $\cL_1^* (\cL_0^*)^{-1}$ is bounded
  from $H^2(\nuref)$ to $L^2(\nuref)$.
\end{lemma}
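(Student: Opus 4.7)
The plan is to differentiate the defining equation $\cL_0^* u = (1-\Pi_z)f$, where $u := (\cL_0^*)^{-1}(1-\Pi_z)f$, once or twice with respect to $z$, and combine the coercivity of $\cL_0^*$ on $(1-\Pi_z)L^2(\nuref)$ (Lemma~\ref{lem:conditioned_coercivity}) with the commutation identity~\eqref{eq:commutation_Pi_z} and a basic dissipative energy estimate on the spatial gradient of $u$. Two preliminary inputs are needed. First, Lemma~\ref{lem:conditioned_coercivity} gives $\|u\|_{L^2(\nuref)} \leq C\|f\|_{L^2(\nuref)}$. Second, testing the defining equation against $u$ in $L^2(\nuref)$ causes the Hamiltonian (antisymmetric) part of $\cL_0^*$ to drop out, so that only the dissipative contribution survives and one obtains $\|\nabla_\bullet u\|_{L^2(\nuref)} \leq C\|f\|_{L^2(\nuref)}$, where $\nabla_\bullet$ denotes $\nabla_q$ in the overdamped case and $\nabla_p$ in the inertial case.

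For the first assertion, I apply $\partial_z$ to the defining equation. Since $\Pi_z \partial_z u$ depends only on $z$ and $\cL_0^*$ annihilates such functions, one obtains
\[
\cL_0^* \bigl[(1-\Pi_z)\partial_z u\bigr] = \partial_z(1-\Pi_z)f - [\partial_z,\cL_0^*]\,u.
\]
In both dynamics, $[\partial_z,\cL_0^*]$ is a first-order differential operator in the spatial variables whose coefficient is a derivative of $U$, uniformly bounded thanks to the smoothness of $U$ and the compactness of $\cD_q\times\cD_z$; crucially, in the inertial case it involves only $\nabla_p$, matching the weaker regularity of $u$. Expanding $\partial_z(1-\Pi_z)f$ via~\eqref{eq:commutation_Pi_z}, the right-hand side has $L^2(\nuref)$ norm $\leq C\|f\|_{H^1(\nuref)}$, and the Lemma~\ref{lem:conditioned_coercivity} bound on $(\cL_0^*)^{-1}$ (restricted to $(1-\Pi_z)L^2(\nuref)$) yields $\|(1-\Pi_z)\partial_z u\|_{L^2(\nuref)} \leq C\|f\|_{H^1(\nuref)}$. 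The remaining piece is handled by~\eqref{eq:commutation_Pi_z} together with $\Pi_z u = 0$, which give $\Pi_z \partial_z u = \beta \Pi_z[(\partial_z U - A')\,u]$, of $L^2(\nuref)$ norm $\leq C\|u\|_{L^2(\nuref)} \leq C\|f\|_{L^2(\nuref)}$.

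For the second assertion I iterate, applying $\partial_z$ once more to get
\[
\cL_0^* \bigl[(1-\Pi_z)\partial_z^2 u\bigr] = \partial_z^2(1-\Pi_z)f - 2[\partial_z,\cL_0^*]\partial_z u - \bigl[\partial_z,[\partial_z,\cL_0^*]\bigr]\,u.
\]
The only genuinely new input is a bound on $\|\nabla_\bullet \partial_z u\|_{L^2(\nuref)}$, which I obtain by running the same dissipative energy identity for the equation satisfied by $(1-\Pi_z)\partial_z u$: since its right-hand side has already been estimated in $L^2(\nuref)$ by $C\|f\|_{H^1(\nuref)}$, the dissipative part of $\cL_0^*$ forces $\|\nabla_\bullet(1-\Pi_z)\partial_z u\|_{L^2(\nuref)} \leq C\|f\|_{H^1(\nuref)}$, and $\nabla_\bullet \Pi_z\partial_z u = 0$ because $\Pi_z$ outputs functions of $z$ alone. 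A second application of~\eqref{eq:commutation_Pi_z} controls $\Pi_z\partial_z^2 u$ in terms of the already-bounded $\|u\|_{L^2(\nuref)}$ and $\|\partial_z u\|_{L^2(\nuref)}$. This gives $\|\partial_z^2 u\|_{L^2(\nuref)} \leq C\|f\|_{H^2(\nuref)}$. The last statement follows immediately: $\cL_1^*$ is a differential operator of order at most two in $z$ with uniformly bounded coefficients, so $\|\cL_1^* u\|_{L^2(\nuref)} \leq C\bigl(\|u\|_{L^2(\nuref)} + \|\partial_z u\|_{L^2(\nuref)} + \|\partial_z^2 u\|_{L^2(\nuref)}\bigr) \leq C\|f\|_{H^2(\nuref)}$ by the first two claims.

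The main obstacle I anticipate is keeping everything uniform in the inertial case, where $\cL_0^*$ is only hypoelliptic so no direct estimate on $\nabla_q u$ is available. What saves the argument is precisely that the $z$-commutators produce only $p$-derivatives, so the standard dissipation estimate in momentum is enough and the harder hypoelliptic gain in $q$ is never invoked; the modified scalar product of Lemma~\ref{lem:conditioned_coercivity} supplies the uniform $L^2$-to-$L^2$ bound on $(\cL_0^*)^{-1}$ that closes the argument in both settings. A standard density argument on smooth $f$ then delivers the stated operator bounds.
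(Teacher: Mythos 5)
Your proof is correct and is, in substance, the argument the paper gives: both reduce the Lemma to (i) the uniform $L^2\to L^2$ bound on $(\cL_0^*)^{-1}(1-\Pi_z)$ from Lemma~\ref{lem:conditioned_coercivity}, (ii) the dissipative energy estimate showing $\nabla_q\cL_0^{-1}(1-\Pi_z)$ (overdamped) or $\nabla_p(\cL_0^*)^{-1}(1-\Pi_z)$ (inertial) is bounded, and (iii) the commutator identities $[\partial_z,\cL_0^*]=\nabla_q(\partial_z U)^T\nabla_p$ (or $-\nabla_q(\partial_z U)^T\nabla_q$) together with~\eqref{eq:commutation_Pi_z}, with your explicit observation that the commutator touches only the dissipative direction in the inertial case doing exactly the work it does in the paper. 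The only difference is presentational: you differentiate the elliptic problem $\cL_0^*u=(1-\Pi_z)f$ and split $\partial_z u$ into its $\Pi_z$ and $(1-\Pi_z)$ parts, whereas the paper writes the equivalent resolvent commutator identity $\partial_z(\cL_0^*)^{-1}(1-\Pi_z)-(\cL_0^*)^{-1}(1-\Pi_z)\partial_z=(\cL_0^*)^{-1}(1-\Pi_z)[\partial_z,\cL_0^*](\cL_0^*)^{-1}(1-\Pi_z)$ directly and iterates it for the second derivative — the same identity viewed at the operator rather than the PDE level.
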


Using this result, the next step is to construct functions
$\fh,\wfh \in L_0^2(\nuref)$ such that
\begin{equation}
  \label{eq:asymptotic_residual_FP}
  \left(\frac1\delta \cL_0 + \cL_1\right)^*\left(1 + \delta \fh 
    + \delta^2 \wfh\right) = \delta^2 \cL_1^* \wfh.
\end{equation}
The equality~\eqref{eq:asymptotic_residual_FP} can be obtained from the hierarchy
\[
\begin{aligned}
  \cL_0^* \fh & = - \cL_1^* \bone = -g_1, \\
  \cL_0^* \wfh & = - \cL_1^* \fh.
\end{aligned}
\]
These equations should be understood as a family of PDEs in~$x$,
parametrized by~$z$. Let us give the formal solutions to these
equations, discussing regularity and integrability issues only in a
second step. For a given value of~$z$, the solvability condition for
the first equation is that $g_1(\cdot,z)$ has zero average with
respect to $\nurefz(dx)$ for all $z \in \cD_z$ (see the estimate recalled in Lemma~\ref{lem:conditioned_coercivity} which implies the invertibility of $\cL_0^*$ on $\mathrm{Ran}(1-\Pi_z)$). 
It is at this stage
that the choice of the reference measure is crucial. In view of~\eqref{eq:def_g1}, it suffices to show that $W$ has zero average with respect to $\nurefz(dx)$ for all $z \in \cD_z$. We rewrite to this end the definition~\eqref{eq:def_W} as
\[
W = -\left[\partial_z^2 \Uk - \beta (\partial_z \Uk)^2 - \Ak'' + \beta \Ak' \partial_z \Uk \right] - (\overline{\beta}-\beta)(\partial_z \Uk - \Ak')\Ak'.
\]
Each of the two factors in the last equality has mean~0 with respect
to $\nurefz(dx)$ in view of the following identities:
\begin{equation}
  \label{eq:derivatives_A_kappa}
  \begin{aligned}
  \Ak'(z) & = \frac{\int_{\cD_q} \partial_z \Uk(q,z) \, \rme^{-\beta
      \Uk(q,z)}\, dq}
  {\int_{\cD_q} \rme^{-\beta \Uk(q,z)}\, dq}, \\
  \Ak''(z) & = \frac{\int_{\cD_q} \left[\partial^2_z \Uk(q,z) - 
      \beta (\partial_z \Uk(q,z))^2\right]\, \rme^{-\beta \Uk(x,z)}\, dq}
  {\int_{\cD_q} \rme^{-\beta \Uk(q,z)}\, dq} + \beta \left(\Ak'(z)\right)^2,
  \end{aligned}
\end{equation}
hence $W$ is of mean~0 with respect to $\nurefz(dx)$. Therefore,
$(\cL_0^*)^{-1} g_1$ is well defined and has zero average with respect
to $\nurefz(dx)$ for any value of~$z \in \cD_z$. The general solution
of $\cL_0^* \mathfrak{h} = -g_1$ is then
\[
\fh(x,z) = -\left(\left(\cL_0^*\right)^{-1} g_1\right)(x,z) + \overline{\fh}(z),
\]
with $\overline{\fh}$ unspecified at this stage.

Let us now prove that it is possible to construct a function $\wfh$
such that $\cL_0^* \wfh = -\cL_1^* \fh$. This condition will in fact
determine $\overline{\fh}$. The solvability condition reads
\[
\Pi_z \cL_1^* \fh = 0 = \Pi_z \cA \Pi_z \overline{\fh} - G, \qquad 
G = \Pi_z \cL_1^* \left(\cL_0^*\right)^{-1} g_1,
\]
where we have used again that $\Pi_z W = 0$ as well as the reformulation $\cA = -\overline{\beta}^{-1} \partial_z^* \partial_z$ of~\eqref{eq:mathcal_A}. Note that $G(z)$ has zero average with respect to $\onuref(dz)$ since
\[
\int_{\cD_z} G \, d\onuref 
= \int_{\cE} \cL_1^* \left(\cL_0^*\right)^{-1} g_1 \, \nuref 
= \int_{\cE} \left[\left(\cL_0^*\right)^{-1} g_1 \right] 
\left(\cL_1 \bone\right) \, \nuref = 0.
\]
Therefore, 
\[
\overline{\fh} = \left(\Pi_z \cA \Pi_z\right)^{-1} G \in L^2_0(\onuref) = \left\{f \in L^2(\onuref), \ \int_{\cD_z} f(z) \, \onuref(dz) = 0 \right\},
\]
since $\Pi_z \cA \Pi_z$ is invertible on $L^2_0(\onuref)$ (from the coercivity property implied by the Poincar\'e inequality~\eqref{eq:Poincare_marginal}). It is then also possible to consider
\[
\wfh = -(\cL_0^*)^{-1} (1-\Pi_z) \cL_1^* \fh.
\]
A simple extension of Lemma~\ref{lem:reg_commutator_dz} 
shows that $\mathfrak{h} \in H^4(\nuref)$ in fact, so that
$\wfh \in H^2(\nuref)$ and $\cL_1^* \wfh \in L^2(\nuref)$.

To conclude the proof, we note that 
\[
\left(\frac1\delta \cL_0 + \cL_1\right)^*\Big(h_\delta 
- (1 + \delta \fh + \delta^2 \wfh)\Big) = -\delta^2 \cL_1^* \wfh.
\]
Now, 
\[
Q_\delta \Big(h_\delta - (1 + \delta \fh + \delta^2 \wfh)\Big) 
= h_\delta - (1 + \delta \fh + \delta^2 \wfh),
\]
so that the desired estimate follows from Corollary~\ref{corr:resolvent_estimates}.

\subsection{Proof of Lemma~\ref{lem:reg_commutator_dz}}
\label{section:commutator2}

  Let us first recall that $\nabla_q \cL_0^{-1}(1-\Pi_z)$ and
  $\nabla_p (\cL_0^*)^{-1}(1-\Pi_z)$ are bounded operators, for the
  overdamped and inertial dynamics respectively. Indeed, for
  overdamped dynamics,
  \[
  \begin{aligned}
  \|\nabla_q \phi \|_{L^2(\nuref)}^2 & = \|\nabla_q (1-\Pi_z) \phi
  \|_{L^2(\nuref)}^2\\
  & = \left\langle \nabla_q^* \nabla_q (1-\Pi_z)\phi, (1-\Pi_z)\phi\right\rangle \\
  & = -\beta \left\langle \cL_0 (1-\Pi_z)\phi, (1-\Pi_z)\phi\right\rangle,
  \end{aligned}
  \]
  so that 
  \[
  \|\nabla_q \phi \|_{L^2(\nuref)}^2 \leq \beta
  \|\cL_0(1-\Pi_z)\phi\|_{L^2(\nuref)} 
  \|(1-\Pi_z)\phi\|_{L^2(\nuref)}.
  \]
  Replacing $(1-\Pi_z)\phi$ by $\cL_0^{-1}(1-\Pi_z)\varphi$ shows that
  $\nabla_q \cL_0^{-1}(1-\Pi_z)$ is bounded on $L^2(\nuref)$, with
  operator norm lower than
  $\sqrt{\beta}
  \|\cL_0^{-1}(1-\Pi_z)\|^{1/2}_{\mathcal{B}(L^2(\nuref))}$. A similar
  computation shows that, for the inertial dynamics,
  $\nabla_p (\cL_0^*)^{-1}(1-\Pi_z)$ is bounded on $L^2(\nuref)$, with
  operator norm also lower than
  $\sqrt{\beta}
  \|\cL_0^{-1}(1-\Pi_z)\|^{1/2}_{\mathcal{B}(L^2(\nuref))}$.
  
  The result now relies on commutator identities. For the overdamped
  dynamics, $\cL_0^* = \cL_0$ and
  \[
  [\partial_z, \cL_0(1-\Pi_z)] = -\nabla_q (\partial_z \Uk)^T \nabla_q,
  \]
  so that
  \begin{equation}
    \label{eq:commutator_dz_overdamped}
    \partial_z (\cL_0^*)^{-1} (1-\Pi_z) - (\cL_0^*)^{-1}
    (1-\Pi_z) \partial_z 
    = \cL_0^{-1} (1-\Pi_z) \nabla_q (\partial_z \Uk)^T \nabla_q \cL_0^{-1}(1-\Pi_z).
  \end{equation}
  It is then easy to see that
  $\partial_z (\cL_0^*)^{-1} (1-\Pi_z) - (\cL_0^*)^{-1}
  (1-\Pi_z) \partial_z$ is bounded on $L^2(\nuref)$ since
  $\nabla_q \cL_0^{-1}(1-\Pi_z)$ is bounded on
  $L^2(\nuref)$. Therefore, $\partial_z (\cL_0^*)^{-1} (1-\Pi_z)$ is
  bounded from $H^1(\nuref)$ to $L^2(\nuref)$. For the inertial
  dynamics, the proof follows the same lines since
  \[
  [\partial_z, \cL_0^*(1-\Pi_z)] = \nabla_q (\partial_z \Uk)^T \nabla_p.
  \]
  
  For second order derivatives in~$z$, we again use the above
  commutation rule. For the overdamped dynamics, a repeated use of~\eqref{eq:commutator_dz_overdamped} 
  gives
  \[
  \begin{aligned}
  & \partial_z^2 (\cL_0^*)^{-1} (1-\Pi_z)\\
  & = \partial_z \cL_0^{-1} (1-\Pi_z) \partial_z 
  + \partial_z \cL_0^{-1} (1-\Pi_z) \nabla_q (\partial_z \Uk)^T \nabla_q \cL_0^{-1}(1-\Pi_z).
  \\
  & = \cL_0^{-1} (1-\Pi_z) \partial_z^2 + \cL_0^{-1} (1-\Pi_z) 
  \nabla_q (\partial_z \Uk)^T \nabla_q \cL_0^{-1}(1-\Pi_z)\partial_z \\
  & \quad+ \cL_0^{-1} (1-\Pi_z) \partial_z 
  \left[\nabla_q (\partial_z \Uk)^T \nabla_q\right] \cL_0^{-1}(1-\Pi_z) \\
& \quad + \cL_0^{-1} (1-\Pi_z) \nabla_q (\partial_z \Uk)^T \nabla_q 
\cL_0^{-1}(1-\Pi_z)\nabla_q (\partial_z \Uk)^T \nabla_q \cL_0^{-1}(1-\Pi_z).
  \end{aligned}
  \]
  The first two operators on the right-hand side of the above equality are clearly bounded from $H^2(\nuref)$ to $L^2(\nuref)$, while the last one is even bounded on $L^2(\nuref)$. For the remaining operator, we note that
\[
\begin{aligned}
& \cL_0^{-1} (1-\Pi_z) \partial_z \left[\nabla_q (\partial_z \Uk)^T
  \nabla_q\right] 
\cL_0^{-1}(1-\Pi_z)\\
& \qquad = \cL_0^{-1} (1-\Pi_z) \nabla_q (\partial^2_z \Uk)^T \nabla_q \cL_0^{-1}(1-\Pi_z) \\
& \qquad \quad + \cL_0^{-1} (1-\Pi_z) \nabla_q (\partial_z \Uk)^T \nabla_q 
\partial_z \cL_0^{-1}(1-\Pi_z), 
\end{aligned}
\]
which is the sum of a bounded operator on $L^2(\nuref)$, and the
composition of the bounded operator
$\cL_0^{-1} (1-\Pi_z) \nabla_q (\partial_z \Uk)^T \nabla_q$ on
$L^2(\nuref)$ (the argument for that being similar to the argument
showing that $\nabla_q \cL_0^{-1}$ is bounded) 
% consider $\cL_0^{-1} (1-\Pi_z) \nabla_q (\partial_z \Uk)^T \nabla_q^*$ which is the same operator up to a sign and a bounded operator; this is the adjoint of a bounded operator
and the operator
$\partial_z \cL_0^{-1}(1-\Pi_z)$, which is bounded from $H^1(\nuref)$
to $L^2(\nuref)$. This allows to conclude that
$\partial_z^2 (\cL_0^*)^{-1} (1-\Pi_z)$ is bounded from $H^2(\nuref)$
to $L^2(\nuref)$ for the overdamped dynamics. Similar manipulations
give the same result for the inertial dynamics.

%-------------------------------------------------------------------------
\section{Proof of Proposition~\ref{prop:approx_solution_Poisson}}
\label{sec:proof_expansion}

Fix $\varphi \in H^2(\nuref)$. We seek to solve
$-\cL_\delta \Phi_\delta = P_\delta \varphi$ by approximating the
solution by $\Psi + \delta \psi$. If we can construct
$\Psi,\psi \in H^2(\nuref)$ such that
\[
\left\{ \begin{aligned}
-\cL_0 \Psi & = 0, \\
-\cL_0 \psi & = \cL_1 \Psi + P_0\varphi, 
\end{aligned} \right.
\]
then
\[
-\sL_\delta(\Phi_\delta - \Psi - \delta \psi) = \delta \cL_1 \psi + (P_\delta-P_0)\varphi. 
\]
If $\cL_1\psi \in L^2(\nuref)$ (which is indeed the case when
$\psi \in H^2(\nuref)$) with, for some $K > 0$ independent of
$\varphi$,
\begin{equation}
  \label{eq:estimate_L1_psi}
  \left\|\cL_1\psi\right\|_{L^2(\nuref)} \leq K \|\varphi\|_{H^2(\nuref)},
\end{equation}
then, by construction, the right-hand side of the above equation is in
$P_\delta L^2(\nuref)$ since it is in the image of $\sL_\delta$ and
hence has zero average with respect to~$\nu_\delta$. 
Therefore, the desired estimate directly follows
from~\eqref{eq:diff_Pdelta_P0} and the resolvent estimates provided by
Corollary~\ref{corr:resolvent_estimates}.

Let us now turn to the construction of the functions $\Psi,\psi$,
following a strategy very similar to the one used for the proof of
Theorem~\ref{thm:inv_meas}. For the regularity estimates, we use the
fact that $\cL_1$ is a bounded operator from $H^{n+2}(\nuref)$ to
$H^n(\nuref)$ for any $n \geq 0$, and that $\cA^{-1}$ is a bounded
operator from $H^n(\onuref) \cap L^2_0(\onuref)$ to $H^{n+2}(\onuref)$
for any $n \geq 0$. The equation $-\cL_0 \Psi = 0$ shows that
$\Psi(x,z) = \overline{\Psi}(z)$ is a function of~$z$ only. The
solvability condition for $-\cL_0 \psi = \cL_1 \Psi + P_0\varphi$ is
\[
\Pi_z\left(\cL_1 \Psi + P_0\varphi \right) = 0 
= \cA \overline{\Psi} + \Pi_z P_0 \varphi.
\]
Since $\Pi_z P_0 \varphi$ has zero average with respect to $\onuref$,
this shows that
\[
\overline{\Psi} = -\cA^{-1}\Pi_z P_0 \varphi.
\]
It is clear, from the regularizing properties of $\cA^{-1}$, that
$\overline{\Psi} \in H^4(\onuref)$ since $P_0 \varphi \in H^2(\onuref)$. Moreover, it is possible in this
case to define
\[
\psi = \cL_0^{-1}(1-\Pi_z)\left( \cL_1 \Pi_z \cA^{-1}\Pi_z - 1\right)P_0\varphi.
\]
It is also easy to check that $\psi \in H^2(\nuref)$ and
that~\eqref{eq:estimate_L1_psi} is satisfied.

%----------------------------------------------------------------------------------------

\subsection*{Acknowledgements}
Part of this work was done during the authors' stay at the Institut
Henri Poincar\'e - Centre Emile Borel during the trimester
``Stochastic Dynamics Out of Equilibrium'' (April-July 2017). The authors warmly thank
this institution for its hospitality, and Inria Paris who funded the stay
of EVE through an invited professor fellowship. The work of GS was
funded in part by the Agence Nationale de la Recherche, under grant
ANR-14-CE23-0012 (COSMOS) and by the European Research Council under
the European Union's Seventh Framework Programme (FP/2007-2013) / ERC
Grant Agreement number 614492. He also benefited from the scientific
environment of the Laboratoire International Associ\'e between the
Centre National de la Recherche Scientifique and the University of
Illinois at Urbana-Champaign. The work of EVE was funded in part by
the Materials Research Science and Engineering Center (MRSEC) program
of the National Science Foundation (NSF) under award number
DMR-1420073 and by NSF under award number DMS-1522767.

%----------- Biblio --------------

\end{document}